\numberwithin{equation}{section}
\theoremstyle{plain}
\newtheorem{theorem}{Theorem}[section]
\newtheorem{lemma}[theorem]{Lemma}
\newtheorem{corollary}[theorem]{Corollary}
\theoremstyle{definition}
\newtheorem{definition}[theorem]{Definition}
\newtheorem{remark}[theorem]{Remark}
\newtheorem{example}[theorem]{Example}
\def\beqn{\begin{equation}}
\def\beqn*{$$}
\def\eeqn{\end{equation}}
\newcommand{\bx}{{\bf x}}
\newcommand{\by}{{\bf y}}
\newcommand{\bi}{{\bf i}}
\newcommand{\bj}{{\bf j}}
\def\P{\mathbb{P}}
\def\F{\mathbb{F}}
\def\E{\mathbb{E}}
\def\Pn{\mathcal P_n}
\def\PH{\text{PH}}
\newcommand{\reals}{{\mathbb R}}
\newcommand{\bbr}{\reals}
\newcommand{\R}{\reals}
\newcommand{\bbn}{{\mathbb N}}
\newcommand{\X}{{\mathcal{X}}}
\newcommand{\Y}{{\mathcal{Y}}}
\newcommand{\I}{\mathcal I}
\newcommand{\Q}{\mathcal Q}
\newcommand{\B}{\mathcal B}
\newcommand{\Yp}{{\mathcal{Y}^{\prime}}}
\newcommand{\ta}{\theta}
\newcommand{\cechone}{\mathcal C_{1}}
\newcommand{\Rpn}{R}
\newcommand{\PHk}{\text{PH}_k}
\newcommand{\muk}{\mu^{(k)}}
\newcommand{\NnRkm}{\Phi_{n}^{(k,m)}}
\newcommand{\Nnkm}{\Phi_{n}^{(k,m)}}
\newcommand{\Nnkp}{\Phi_{n}^{(k,p)}}
\newcommand{\Nnkpone}{\Phi_n^{(k,p-1)}}
\newcommand{\NnRk}{\Phi_{n}^{(k)}}
\newcommand{\NtnRkm}{\widetilde \Phi_{n}^{(k,m)}}
\newcommand{\Ntnkp}{\widetilde \Phi_{n}^{(k,p)}}
\newcommand{\Nhnkp}{\widehat\Phi_n^{(k,p)}}
\newcommand{\NtnRk}{\widetilde \Phi_{n}^{(k)}}
\newcommand{\Nkp}{\Phi^{(k,p)}}
\newcommand{\BM}{\mathcal B_{2M}}
\newcommand{\pM}{Q_{2M}}
\newcommand{\matnp}{\begin{pmatrix} n \\ p \end{pmatrix}}
\newcommand{\pois}[1]{\mathrm{Poisson}\param{{#1}}}
\newcommand{\one}{{\mathbbm 1}}
\def\iid{\mathrm{iid}}
\newcommand{\param}[1]{\left(#1\right)}
\newcommand{\set}[1]{\left\{#1\right\}}
\def\cP{\mathcal{P}}
\def\cY{\mathcal{Y}}
\def\cC{\mathcal{C}}
\def\cR{\mathcal{R}}
\newcommand{\mean}[1] {\E\left\{{#1}\right\}}
\newcommand{\cech}{\v{C}ech }
\newcommand{\remove}[1]{}
\begin{document}

\bibliographystyle{abbrv}

\title[Convergence of Persistence Diagrams]
{Convergence of Persistence Diagrams for Topological Crackle}
\author{Takashi Owada}
\address{Department of Statistics\\
Purdue University \\
IN, 47907, USA}
\email{owada@purdue.edu}
\author{Omer Bobrowski}
\address{Department of Electrical Engineering\\
Technion--Israel Institute of Technology \\
Haifa, 32000, Israel}
\email{omer@ee.technion.ac.il}

\thanks{TO's research is partially supported by the NSF: Probability and Topology \#1811428.}

\subjclass[2010]{Primary 60G70. Secondary 55U10, 60F05, 60G55. }
\keywords{Extreme value theory, Topological crackle, Persistent homology, Fell topology, Point process.}

\begin{abstract}
In this paper we study the persistent homology associated with topological crackle generated by distributions with an unbounded support. Persistent homology is a topological and algebraic structure that tracks the creation and destruction of homological cycles (generalizations of loops or holes) in different dimensions. Topological crackle is a term that refers to homological cycles generated by ``noisy" samples where the support is unbounded.
We aim to establish weak convergence results for persistence diagrams -- a point process representation for persistent homology, where each homological cycle is represented by its $({birth,death})$ coordinates.
In this work we treat  persistence diagrams as random closed sets, so that the resulting weak convergence is defined in terms of the Fell topology. In this framework we show that the limiting persistence diagrams can be divided into two parts. The first part is a deterministic limit containing a densely-growing number of persistence pairs with a short lifespan. The second part is a two-dimensional Poisson process, representing persistence pairs with a longer lifespan. 
\end{abstract}

\maketitle

\section{Introduction} \label{sec:intro}
Persistent homology has emerged as a mathematical tool to analyze data in a way that is low-dimensional, coordinate-free, and robust to various deformations. The main idea is to extract topological ``features" from  data,   known as homological $k$-cycles (where $k$ represents dimension), in a multi-scale way that is stable under perturbations of the data.
Loosely speaking, a (nontrivial) $k$-cycle in a topological space is a structure that is topologically equivalent to a $k$-dimensional sphere (i.e.~the boundary of a $(k+1)$-dimensional ball). In order to find such structures in a dataset $\cP$, a common practice is to place balls of radius $r$ around the data, and consider their union $B_r(\cP)$.
Alternatively, one may construct a simplicial complex -- a higher dimensional notion of a graph that serves as a combinatorial representation for the geometric object. 
In this paper we will consider the \cech complex  generated by balls of radius $r$ around the sample, denoted $\cC_r(\cP)$ (see Section \ref{sec:geometric.complex} for a formal definition). 
\remove{In this paper we will consider the \cech and Vietoris-Rips complexes generated by balls of radius $r$ around the sample, denoted $\cC_r(\cP)$ and $\cR_r(\cP)$ respectively (see Section... for definitions).}

Considering the complex $\cC_r(\cP)$ and growing the parameter $r$, we get a nested sequence of complexes called filtration, in which $k$-cycles are created and destroyed (become trivial) at various times. Persistent homology is an algebraic structure that is designed to track these changes in cycles and produce a list of pairs $(birth,death)$ representing the time (radius) at which each cycle first appears in the filtration and the time at which it is terminated (becomes trivial, or ``gets filled"), respectively.

Commonly, the output of persistent homology (i.e.~a list of birth/death times) is summarized in a plot known as ``persistence diagram", see Figure \ref{fig:pd}.
In this plot, a single point is drawn for any $k$-cycle, for which the $x$-axis value represents its birth time, and the $y$-axis value represents the death time.

\begin{figure}
\centering
\includegraphics[scale=0.5]{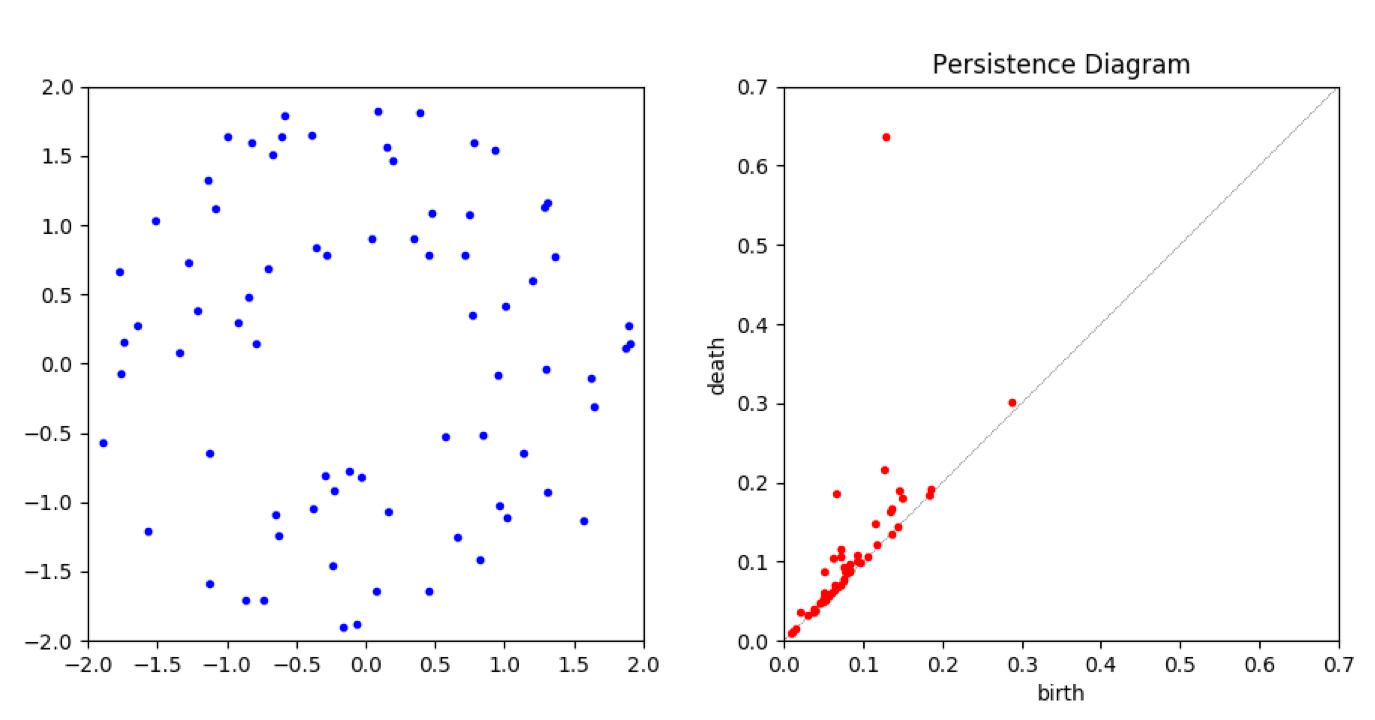}
\caption{\label{fig:pd} \footnotesize{The persistence diagram of a random \cech filtration. On the left figure the point process is generated on an annulus in $\R^2$. The persistence diagram on the right describes the birth and death times (radii) of all the $1$-cycles that appear in this filtration. Notice that most of the points in the persistence diagram are close to the diagonal where birth time equals death time, and one might consider these cycles as ``noise."
There is one point that stands out in the diagram, which corresponds to the hole of the annulus. The persistent homology was computed using the GUDHI library \cite{gudhi:urm}.}} 
\end{figure}

The study of homology and persistent homology generated by random data, started in \cite{kahle:2011}, and has been an active research topics over the past decade (see the survey in \cite{bobrowski:kahle:2018}).
Much of this study is dedicated to examining the behavior of noise (i.e.~point clouds that contain no intrinsic topological structure), and can be thought of as the development of ``null-models" for topological data analysis.

Most relevant to this paper is the work in \cite{duy:hiraoka:shirai:2018}, studying the distribution of points in the persistence diagram generated by point processes in a $d$-dimensional box. In \cite{duy:hiraoka:shirai:2018}, persistence diagrams are considered as Radon measures, for which the authors prove the existence of a limit in the form of a deterministic measure. In addution, they provide a law of large numbers and a central limit theorem for the persistent Betti numbers, i.e.~the number of $k$-cycles that exist over a given range of radii.

Quite commonly however, measurement noise may not be modelled by a distribution with a bounded support. In this case the analysis for persistence diagrams in \cite{duy:hiraoka:shirai:2018} is no longer valid. Previous works \cite{adler:bobrowski:weinberger:2014, owada:adler:2017} have studied fixed, rather than persistent, homology generated by distributions with unbounded supports. The first main finding was that if an underlying distribution generating data, has a tail at least as heavy as that of an exponential distribution, then nontrivial $k$-cycles keep appearing far away from the origin, 
regardless of the amount of points and  the choice of radius. The second  main finding was the emergence of a layered structure dividing the Euclidean space, with  each layer occupied by cycles of different degrees and  amounts. Briefly, as we get closer to the origin, the cycles of higher degrees appear and their number  increases. The fact that different regions in space are occupied by different types of structures at different quantities, suggests that  one may not look for a single limit theorem for fixed and persistent homology. Alternatively we could only provide separate limit theorems for each individual region. Similar phenomena have been pointed out in a series of works \cite{bobrowski:adler:2014,bobrowski:mukherjee:2015, duy:hiraoka:shirai:2018,kahle:meckes:2013,yogeshwaran:subag:adler:2017}, in which various limit theorems for topological invariants in different regimes were derived (though they are not directly related to the layered structure described above). 

The creation of an increasing number of cycles away from the origin has been given the name \textit{topological crackle}. The layered structure of the crackle, described in the last paragraph is visualized in Figure \ref{fig:crackle}. Topological crackle is typically generated by heavy tailed distributions, so the study of its topological features belongs to extreme value theory (EVT). EVT studies the extremal behavior (e.g., maxima) of stochastic processes with a variety of probabilistic and statistical applications. The standard literature on EVT includes \cite{resnick:1987,embrechts:kluppelberg:mikosch:1997,dehaan:ferreira:2006,resnick:2007}. In recent years many attempts have been made at understanding the geometric and topological features of multivariate extremes, among them \cite{balkema:embrechts:2007, balkema:embrechts:nolde:2010, schulte:thale:2012, decreusefond:schulte:thaele:2016} as well as \cite{adler:bobrowski:weinberger:2014, owada:adler:2017} cited above. 

\begin{figure}
\includegraphics[scale=0.36]{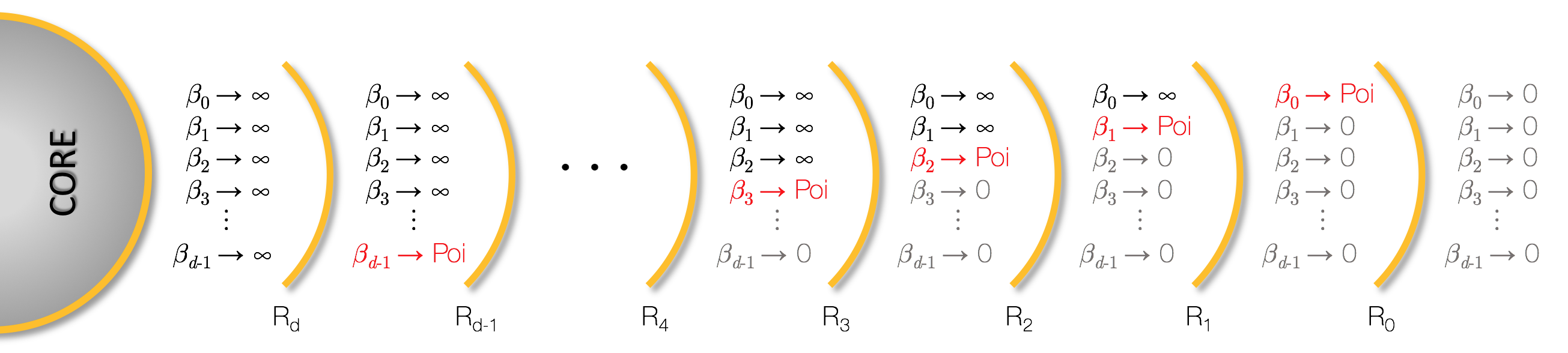}
\caption{\label{fig:crackle} \footnotesize{ Topological crackle is a layered structure of Betti numbers. ``Poi" stands for a Poisson distribution. For each individual layer (except most inner and outer ones), there is a unique $k \in \{  0,\dots,d-1\}$ such that the $k$th Betti number is approximated by a Poisson distribution, while all the other Betti numbers either vanish or diverge. }}
\end{figure}

In this paper we wish to study the probabilistic behavior of  persistence diagrams, under the assumption that the data are generated by a sequence of iid random variables, sampled from a distribution with a sufficiently heavy tail. Similarly to \cite{adler:bobrowski:weinberger:2014, owada:adler:2017}, we will divide $\R^d$ into layers of the form $L_R := \set{x\in \R^d : \|x\| \ge R}$, and establish limit theorems for the persistence diagram. As we will show later, for a given layer $L_R$, one can divide the persistence diagram into multiple regions, such that the number of persistence birth-death pairs grows at different orders of magnitude. This will be visualized later in Figure \ref{fig:pers_limit}. 
The main thrust of this paper is to study the limiting behavior of persistence diagram under the so-called Fell topology by treating persistence diagram as a random closed set in $\bbr^2$. A more formal discussion on the Fell topology is given in Section \ref{sec:Fell.topology}. Our approach is complementary to the relevant work \cite{duy:hiraoka:shirai:2018}, in which the authors treated persistence diagram as a random measure. 

The rest of this paper is organized as follows. In Section \ref{sec:preliminaries} we introduce the terminology used throughout the paper. Section \ref{sec:RV.tail} provides the main results of this paper, considering  heavy tailed distributions. In Section \ref{sec:exp.tail} we discuss the behavior of the model for exponentially decaying distributions. 
We will classify results in terms of heaviness of a tail of an underlying distribution. Such classification is typical in EVT. 
The proofs for both sections are presented in Section \ref{sec:proofs}. 

\section{Preliminaries}  \label{sec:preliminaries}

\subsection{Geometric complexes}  \label{sec:geometric.complex}
An \emph{abstract simplicial complex} over a set $S$ is a collection of finite subsets $X \subset 2^S$ with the requirement that if $A\in X$ and $B\subset A$ then $B\in X$. 
A subset in $X$ of size  $k+1$ is called a \emph{$k$-simplex}, and commonly denoted as $\sigma = [x_0,\ldots,x_k]$.

In this work we discuss abstract simplicial complexes that are generated by a set of points $\cP\subset\R^d$, called \emph{geometric complexes}. 
Among many candidates of geometric complexes (see \cite{ghrist:2014}), the present paper focuses on one of the most studied ones, a \textit{\cech complex}. For construction we start by fixing a radius $r>0$, and drawing balls of radius $r$ around the points in $\cP$.
\remove{We will focus on two types of complexes, known as the \emph{\cech} and the \emph{Vietoris-Rips} complexes\footnote{Do we want to include Rips?}.} 
\begin{definition} \label{cech}
A \v{C}ech complex $\cC_r (\mathcal P)$ is defined by the following two conditions. 
\begin{enumerate}
	\item The 0-simplices are the points in $\mathcal{P}$.
	\item A $k$-simplex $[x_0, \dots, x_k]$ is in $\cC_r(\cP)$ if $\bigcap_{j = 0}^k B(x_{j}; r) \neq \emptyset$, 
\end{enumerate}
where $B(x; r) = \{y \in \R^d: |x - y| < r\}$ is an open ball of radius $r$ around $x \in \R^d$.  
\end{definition}
\remove{\begin{itemize}
\item {\bf The \cech complex:} \\
A subset $\set{p_1,\ldots,p_{k+1}}\subset \cP$ forms a simplex if $\cap_{i=1}^{k+1} B_r(p_i) \ne \emptyset$. The resulting complex is denoted $\cC_r(\cP)$.
\item {\bf The Vietoris-Rips complex:}\\
A subset $\set{p_1,\ldots,p_{k+1}}\subset \cP$ forms a simplex if $B_r(p_i)\cap B_r(p_j) \ne \emptyset$ for all $i,j$. The resulting complex is denoted $\cR_r(\cP)$.
\end{itemize}}

One of the key properties of the \cech complex $\cC_r(\cP)$, known as the \emph{Nerve Lemma} (see, e.g., Theorem 10.7 of \cite{bjorner:1995}), asserts that the union of balls $B_r(\cP) := \cup_{p\in \cP}B_r(p)$ and $\cC_r(\cP)$ are homotopy equivalent. In particular they have the same homology groups, i.e. for all $k \geq 0$, $H_k(B_r(\cP))\cong H_k(\cC_r(\cP))$.

\subsection{Persistent homology}
In this section we wish to describe homology and persistent homology in an intuitive and non-rigorous way, which is enough for the reader to follow the statements and proofs in this paper. We suggest \cite{carlsson:2009,ghrist:2008} as a good introductory reading, while a more rigorous coverage of algebraic topology is in \cite{hatcher:2002}. 

Let $X$ be a topological space. In this paper we will consider homology with field coefficients $\F$, in which case homology is essentially a sequence of vector spaces denoted $H_0(X),H_1(X), H_2(X),\ldots$. More specifically the basis of $H_0(X)$ corresponds to the connected components in $X$, and the basis of $H_1(X)$ corresponds to closed loops in $X$. The basis of $H_2(X)$ corresponds to cavities or ``air bubbles" in $X$, and generally, the basis of $H_k(X)$ corresponds to \emph{non-trivial $k$-cycles} in $X$. In addition to describing the topology of a single space $X$, homology theory also analyzes mappings between spaces. If $f:X\to Y$ is a map between topological spaces, then the induced map $f_*:H_k(X)\to H_k(Y)$ is a linear transformation, describing how $k$-cycles in $X$ are transformed into $k$-cycles in $Y$ (or disppear).

\emph{Persistent homology} can be thought of as a ``multi-scale" version of homology, designed to describe topological properties in a sequence of spaces. Let $\set{X_t}_t$ be a filtration of spaces, so that $X_s \subset X_t$ for all $s\le t$. In this case, one can consider the collection of vector spaces $\set{H_k(X_t)}_t$, together with the corresponding linear transformations $\imath_*^{(s,t)} : H_k(X_s)\to H_k(X_t)$ for all $s\le t$ induced by the inclusion map $\imath^{(s,t)}:X_s\to X_t$. Such a sequence is called a \emph{persistence module} (cf.~ \cite{carlsson:2009}).
Essentially, this sequence allows us to track the evolution of $k$-cycles as they are formed and terminated throughout the filtration. The theory developed for persistence modules allows for the definitions of \emph{barcodes}, which consist of intervals of the form $[\text{birth},\text{death})$, representing the 	time (the value of $t$) when a given cycle first appears and the time when it disappears, respectively.

Commonly, the information on the $k$th persistent homology is graphically provided via $k$th \emph{persistence diagram}. This is a two-dimensional plot, where each persistence interval of the form [birth, death) is represented as a single point, with the $x$-axis representing birth time and the $y$-axis representing death time. Figure \ref{fig:pd} shows an example of a persistence diagram generated by a \cech filtration $\set{\cC_r(\cP)}_r$, where $\cP$ is a random sample from an annulus.

For the study of the  filtration of geometric complexes, some structure must be imposed on persistence diagrams. First, notice that any persistence diagram is a subset of 
$$
\Delta:= \set{(x,y) : 0\le x \le y}, 
$$ 
as death times always come after birth times. 
Further, given two positive integers $m$ and $k$, we fix an arbitrary birth time (radius) $b_0$. Then there exists an attainable maximum $d_0$ for the death time of $k$-cycles generated on $m$ points whose birth time is $b_0$ 
(which is $O(m^{1/k})$, see \cite{bobrowski:kahle:skraba:2017}).
Denoting $\pi_{k,m} = d_0/b_0$, the scaling invariance of persistent homology implies that all $k$-cycles generated on $m$ points are restricted to the region
\begin{equation*}
\Delta_{k,m} := \set{(x,y) : 0\le x \le y \le \pi_{k,m}x} \subset \Delta.
\end{equation*}
More precisely $\pi_{k,m}$ is a constant for which forming a persistence $k$-cycle is possible if $(x,y) \in \Delta_{k,m}$, but it becomes infeasible whenever $y > \pi_{k,m}x$. 
Notice that $m$ has to be at least $k+2$ in order to generate any cycles in the $k$th persistent homology for the \cech filtration. Finally, $\Delta_{k,m}$ is non-decreasing in $m$, that is, $\Delta_{k,m_1} \subset \Delta_{k,m_2}$ for all $m_1 \le m_2$; see Figure \ref{fig:pd_regions}.

\begin{figure}
\includegraphics[scale=0.25]{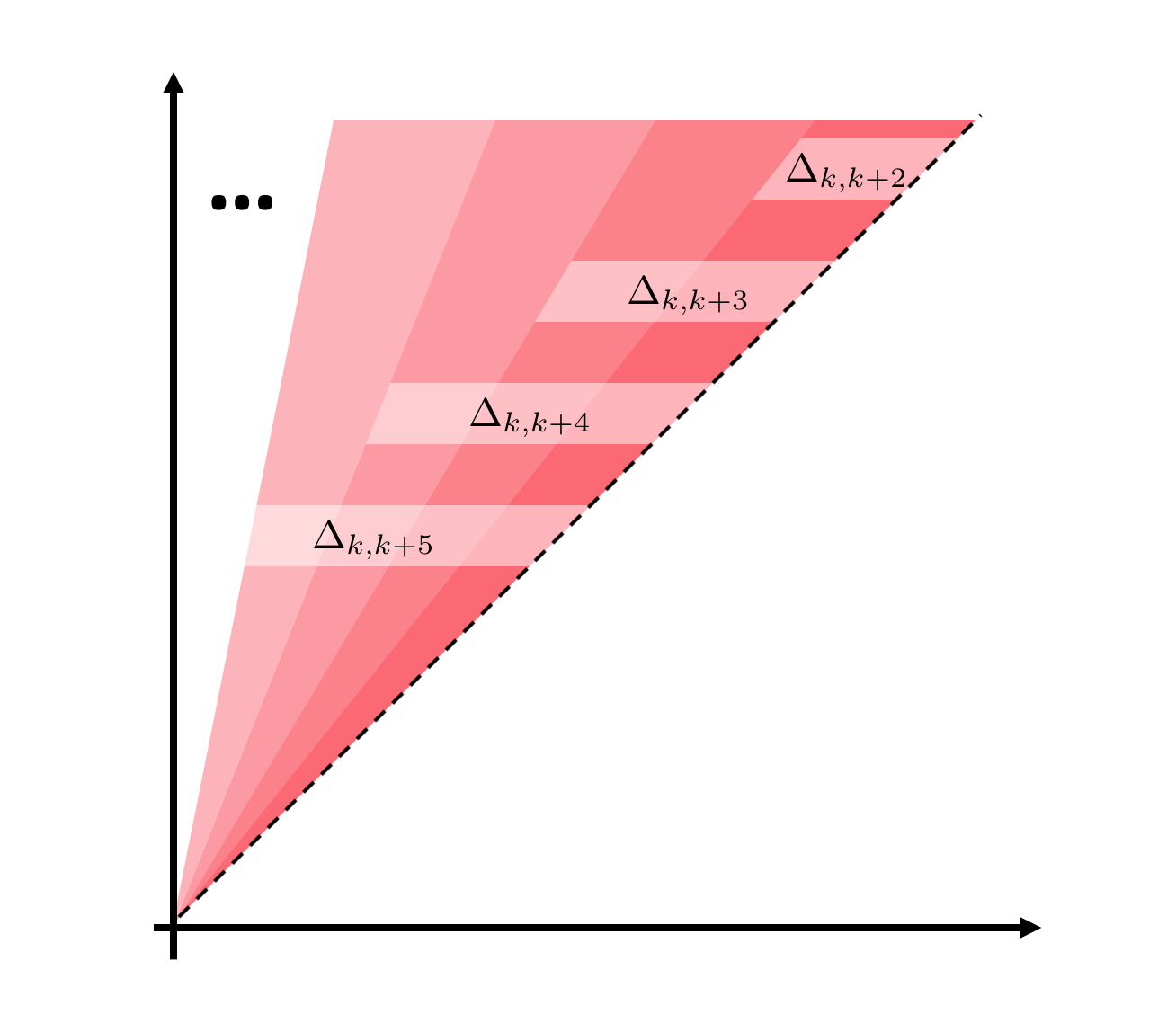}
\caption{\label{fig:pd_regions}
\footnotesize{ The structure of persistence diagrams generated by a \cech filtration.
Note that $\Delta_{k,m}$ represents the region in which the $k$th persistence pairs generated by subsets of size $m$, may appear. For a \cech filtration, $m$ has to be at least $k+2$.
}}
\end{figure}
\subsection{Fell topology}  \label{sec:Fell.topology}

The novel idea of the current paper is to treat random points in the persistence diagram as closed sets in $\Delta$. To this aim we introduce \textit{Fell topology}, which is perhaps the most standard topology on closed sets. Let $\mathcal F (\Delta)$ be the space of closed sets of $\Delta$. A sequence of closed sets $(F_n)$ converges to another closed set $F$ if and only if the following two conditions hold. 
\begin{itemize}
\item $F$ hits an open set $G$, i.e., $F\cap G \neq \emptyset$, implies there exists $N \geq 1$ such that for all $n\geq N$, $F_n$ hits $G$. 
\item $F$ misses a compact set $K$, i.e., $F \cap K =\emptyset$, implies there exists $N \geq 1$ such that for all $n\geq N$, $F_n$ misses $K$. 
\end{itemize}
By this property, Fell topology can be recognized as ``hit and miss" topology. 
 Stochastic properties of standard graphical tools used in EVT have been explored via convergence theorems under the Fell topology \cite{das:resnick:2008, ghosh:resnick:2010, das:ghosh:2013}. The main reference we use in this paper is \cite{molchanov:2005}.

The Fell topology is metrizable and hence induces a Borel $\sigma$-field $\B (\mathcal F (\Delta))$. Given a probability space $(\Omega, \mathcal A, \P)$, we say that $S: \Omega \to \mathcal F(\Delta)$ is a random closed set if 
$$
\{ \omega: S \cap K \neq \emptyset \} \in \mathcal A
$$
for every compact set $K$ in $\Delta$, i.e.~observing $S$, one can always determine if $S$ hits or misses any given compact set. Let us provide some facts about the  convergence of a sequence of random closed sets. 
Given random closed sets $(S_n)$ and $S$, the weak convergence $S_n \Rightarrow S$ in $\mathcal F(\Delta)$ is implied by 
$$
\P(S_n \cap K \neq \emptyset) \to \P(S\cap K \neq \emptyset)
$$
for every compact subset $K \subset \Delta$. For a measurable set $A \subset \Delta$ and $\epsilon >0$, denote by
\begin{equation}  \label{e:open.envelop}
(A)^{\epsilon- } = \bigl\{  (x,y)\in \Delta: d\bigl( (x,y), A \bigr)  < \epsilon \bigr\}
\end{equation}
an open $\epsilon$-envelop in terms of the Euclidean metric $d$. We say that $S_n$ converges to $S$ in probability if 
$$
\P \Big( \big[ \big( S_n \setminus (S)^{\epsilon - } \big) \cup \big( S \setminus (S_n)^{\epsilon -} \big)  \big]\cap K \neq \emptyset \Big) \to 0,  \ \ \ n\to\infty,
$$
for every $\epsilon > 0$ (see Definition 6.19 in \cite{molchanov:2005}). 

The primary benefit of our approach using the Fell topology is that one can establish limit theorems for the entire persistence diagram, even though the nature of the distribution of persistence birth-death pairs differs from region to region. To see this more clearly, let us consider the case where the persistence diagram is approximately divided into two regions, such that the persistence pairs are distributed densely in one region, and in the other region, the distribution is  much more sparse. This is roughly the same picture of persistence diagram as we will see in our main results. Then the previous works \cite{adler:bobrowski:weinberger:2014, owada:adler:2017, owada:2018} only established ``separate" limit theorems for each individual region. 
More specifically, the number of non-trivial cycles (i.e.~Betti number) obeys a central limit theorem if the cycles are distributed so densely that their  number grows to infinity as the sample size increases \cite{owada:2018}. On the other hand, if the spatial distribution of cycles is sparse enough, the Betti numbers will be governed by a Poisson limit theorem \cite{owada:adler:2017}. In contrast to these previous works, our approach allows to describe the entire persistence diagram by a ``single" limit theorem, which would help us get a whole picture of the limiting persistence diagram. 

The main discovery of the present paper is that the limiting persistence diagram as a random closed set splits into two parts. The first part is a deterministic subset of $\Delta$ of the form $\set{(x,y) : 0 \le x \le y \le c_1x}$ for some well-defined $c_1>0$. This part represents the  region containing a densely-growing number of persistence pairs. The second part is a two-dimensional Poisson process, supported on a region of the form $\set{(x,y) : 0 \le  x \le y \le c_2x}$, for some well defined $c_2>c_1$. Figure \ref{fig:pers_limit} provides a sketch of this behavior.

\section{Main results - Regularly Varying Tail Case} \label{sec:RV.tail}

In this section we describe in detail the problem studied in this paper, and present the main results. 
\subsection{Definitions}
The present section considers the following family of density functions with regularly varying tail. As is well-known in EVT, in the one-dimensional case, the regular variation of a tail completely characterizes the maximum-domain of attraction of a Fr\'echet distribution \cite{embrechts:kluppelberg:mikosch:1997}. 
\begin{definition}\label{def:density}
Let $f:\R^d\to\R$ be a probability density function. Let $S^{d-1}$ be the unit sphere in $\R^d$.
\begin{enumerate}
\item We say that $f$ is \emph{spherically symmetric} if $f(\rho \theta_1) = f(\rho \theta_2)$ for any $\rho\in \bbr_+$ and $\theta_1,\theta_2 \in S^{d-1}$.
For such functions we define $f(\rho) := f(\rho \theta)$ for any $\theta\in S^{d-1}$.
\item We say that a spherically symmetric $f$ has a \emph{regularly varying tail} if there exists $\alpha > d$ such that
\begin{equation}  \label{e:RV}
\lim_{\rho\to\infty} \frac{f(\rho t)}{f(\rho)} = t^{-\alpha} \ \ \text{for all  } t >0. 
\end{equation}
\end{enumerate}
\end{definition}

Let $X_1,X_2,\ldots$ be a sequence of $\iid$ random variables, having a common density function $f$ satisfying the conditions in Definition \ref{def:density}. Let $N_n \sim\pois{n}$ be a Poisson random variable, independent of $(X_i)$. Define the following point process
\begin{equation}\label{eq:pois}
	\cP_n := \begin{cases} \set{X_1,\ldots, X_{N_n}} & \text{if } \ N_n > 0,\\ 
	\emptyset & \text{if } \ N_n=0.\end{cases}
\end{equation}
Then one can show that $\cP_n$ is a spatial Poisson process on $\R^d$ with intensity function $nf$.

Let $R = R(n)$ be a sequence of $n$ growing to infinity, and consider 
$$
L_R = \big\{ x \in \bbr^d : \| x \| \geq R \big\} = \big( B(0; R) \big)^c,
$$
where $\| \cdot \|$ denotes Euclidean norm. 
The main objective in this paper is to study the ``extreme-value behavior" of the persistent homology 
for the \cech filtration. 
Namely we study the behavior of persistence cycles far away from the origin, generated by the points in $\mathcal P_{n,R} := \Pn \cap L_R$ for  large enough $n$. 
In other words, we aim to analyze the limiting distribution of persistent homology for the filtration $\{  \cC_{r} (\mathcal P_{n,R})\}_{r \ge 0}$. 

To that end, we define the following functions and objects. Recall that $\Delta = \bigl\{ (x,y)\in \bbr^2: 0\leq x \leq y \bigr\}$ is the upper infinite triangle in the first quadrant. Let $k \ge 1$ be an integer which remains fixed throughout the paper. 
For any finite set $\cY\subset \R^d$ let $\PHk (\Y)$ be the $k$th persistent homology generated by $\set{\cC_r(\cY)}_{r\ge 0}$. We now define the finite counting measure on $\Delta$,
\begin{equation}  \label{e:meas.mu}
\muk_{\Y} (\cdot) := \sum_{\gamma \in \PHk (\Y)} \delta_{(\gamma_b, \gamma_d)} (\cdot),
\end{equation}
where $\gamma$ represents a persistence interval in $\PH_k (\Y)$ whose endpoints $(\gamma_b,  \gamma_d)$ are the birth and death times (radii) respectively. Moreover, $\delta_{(x,y)}(\cdot)$ is the Dirac measure at $(x,y)$. In other words, $\mu_{\cY}^{(k)}$ represents all the pairs $(\gamma_b,\gamma_d)$ that appear in the $k$th persistence diagram generated by the set $\cY$. The finiteness of $\mu_{\cY}^{(k)}$  comes from a simple fact that if $|\Y|=m$, the number of $k$-cycles supported on $m$ vertices is bounded by the number of $k$-faces, which itself is bounded by $\binom{m}{k+1}$. 

We need a few more definitions before introducing the main point processes. For $m \ge 1$, define
$$
m(x_1,\dots,x_m) := \min_{1\leq i \leq m} \|  x_i\|,  \ \ x_i \in \bbr^d. 
$$
For a collection $\Y, \mathcal Z$ of points in $\bbr^d$ with $\Y \subset \mathcal Z$, 
\begin{equation}\label{eq:aux_func}
	\begin{split}
		h_R(\cY) &:= \one \set{m(\cY) \ge R},\\
		g_M(\cY,\mathcal Z) &:= \one\set{\cC_M(\cY) \text{ is a connected component of } \cC_M(\mathcal Z)},\\
		g_M(\cY) &:= g_M(\cY, \cY) = \one\set{\cC_M(\Y) \text{ is connected}}. 
	\end{split}
\end{equation}
The point processes we will examine in this paper are
\begin{equation}\label{eq:Phi}
\begin{split}
\NnRkm (\cdot) &:= \hspace{-5pt}\sum_{\Y\subset \Pn, \, |  \Y| = m}\hspace{-5pt} h_R(\cY) g_M(\cY,\cP_n) \muk_{\Y} (M\cdot\, ),\\
\NnRk (\cdot) &:= \sum_{m=k+2}^\infty \NnRkm (M\cdot),
\end{split}
\end{equation}
where $M=M(n)$ is a sequence of $n$, which will be explicitly determined  below together with $R=R(n)$. 
Note that $M \equiv$ constant is permissible as a special case. 
The process $\NnRkm$ represents the persistence $k$-cycles that are generated by the Poisson process $\cP_{n,R}$ defined above, such that the vertices forming these cycles belong to a single connected component of size $m$ in the complex $\cC_M (\cP_{n,R})$. 
By the construction of \eqref{eq:Phi} and the assumption that the random points generating data have a continuous distribution, the process $\Nnkm$ is simple (i.e.~$\sup_{x\in \Delta} \Nnkm (\{ x \}) \le 1$ a.s.) and finite. 
Forming a $k$-cycle in the \cech complex requires at least $k+2$ vertices; hence, the sum defining $\NnRk$ only starts at $m=k+2$. \remove{all possible $k$-cycles generated by $\cP_{n,R}$}Furthermore $\NnRk$ is almost surely a sum of finite number of point processes, as for $m>|\cP_n|$ we have $\NnRkm \equiv 0$.

\subsection{Weak convergence}

The primary goal in this paper is to prove a weak convergence theorem for $\NnRk$ as $n\to\infty$. 
Since the point process $\Nnkm$ in \eqref{eq:Phi} is simple and finite, the support of $\Nnkm$ is a finite random closed set (see Corollary 8.2 in \cite{molchanov:2005}). In this paper, by a slight abuse of notation, the letter $\Nnkm$ is used to denote both a point process and a random closed set as its support. 
In the latter treatment $\NnRk$ can be denoted as a union of $\Nnkm$'s,
$$
\NnRk = \bigcup_{m=k+2}^\infty \NnRkm. 
$$
This also represents a finite random closed set, because $\Nnkm = \emptyset$ whenever $m > |\Pn|$. 

Consequently, the topology we use for the weak convergence below is Fell topology (see Section \ref{sec:Fell.topology}) on closed sets of $\Delta$. 
 All the proofs, including those for corollaries that follow after Theorem \ref{t:main.fell}, are deferred to Section \ref{sec:proof.RV}. 
\begin{theorem}  \label{t:main.fell}
Let $f$ be a probability density function satisfying the conditions in Definition \ref{def:density}, and 
suppose that $R=R(n),M=M(n)$ are chosen such that $R\to \infty$, $M/R\to 0$ as $n\to\infty$. Furthermore, for some integer $p \ge k+2$, 
\begin{equation}  \label{e:asym.equ}
n^p M^{d(p-1)} R^d( f(R))^p \to 1, \ \ \ n\to\infty.
\end{equation}
Then
\begin{equation}  \label{e:main.fell}
\NnRk \Rightarrow \Nkp \cup B_{k,p-1} \ \ \text{in } \mathcal F (\Delta), \ \ \ n\to\infty, 
\end{equation}
where $\Rightarrow$ denotes weak convergence. The sets $\Nkp$ and $B_{k,p-1}$ are defined below.
\end{theorem}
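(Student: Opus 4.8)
The plan is to reduce the statement to convergence of hitting probabilities, as permitted by the Fell-topology criterion recalled in Section~\ref{sec:Fell.topology}: it suffices to prove $\P(\NnRk \cap K \neq \emptyset) \to \P\bigl( (\Nkp \cup B_{k,p-1}) \cap K \neq \emptyset\bigr)$ for compact $K \subset \Delta$, drawn from a convergence-determining class (e.g. finite unions of closed balls meeting the interiors of the relevant regions, so as to skirt the boundary of $B_{k,p-1}$). The organizing idea is to split $\NnRk = \bigcup_{m\ge k+2}\NnRkm$ by component size $m$ into three regimes dictated by \eqref{e:asym.equ}. Using the multivariate Mecke (Palm) formula together with the regular variation \eqref{e:RV}, anchoring a cluster at one vertex $x_1$ and rescaling the remaining $m-1$ vertices by $M$, one finds that the expected number of $k$-cycles produced by size-$m$ clusters behaves like a constant times $n^m M^{d(m-1)} R^d (f(R))^m$, where the factor $\int_{\|x\|\ge R}(f(\|x\|))^m\,dx \sim \mathrm{const}\cdot R^d (f(R))^m$ comes from $\alpha>d$. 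By \eqref{e:asym.equ} and $M/R\to0$ this tends to a positive constant when $m=p$, diverges when $m<p$, and vanishes when $m>p$, which isolates the two limiting objects: a Poisson part from $m=p$ and a dense deterministic part from $m\le p-1$.

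First I would dispose of the supercritical sizes $m>p$. Since the expected number of such cycles is $O\bigl((nM^df(R))^{\,m-p}\bigr)$ with $nM^df(R)\to0$, Markov's inequality and summation of this geometric-type bound over $m>p$ give $\P\bigl(\bigcup_{m>p}\NnRkm\cap K\neq\emptyset\bigr)\to0$ for every compact $K$, so these terms are negligible. Next, the heart of the argument is the Poisson convergence $\NnRkp\Rightarrow\Nkp$. Here I would prove convergence of the void (equivalently, hitting) probabilities $\P(\NnRkp\cap K=\emptyset)\to e^{-\Lambda_p(K)}$. The limiting intensity $\Lambda_p$ is identified by Palm calculus: conditioning on a connected cluster of $p$ points near radius $\rho\ge R$, the scale invariance of persistent homology together with the regular-variation limit lets one pass, after the $M$-rescaling, to a fixed local law for the $p$-point configuration, so that $\Lambda_p$ equals a spatial/angular constant times the expected $M$-scaled persistence measure $\muk$ of $p$ points. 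The Poisson (independence) structure then follows from the asymptotic independence of distinct clusters---which are spatially well separated because $M/R\to0$---and I would make this rigorous via a Chen--Stein / factorial-moment estimate showing the second factorial moment measure factorizes in the limit.

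For the subcritical sizes $m\le p-1$ I would show that $\bigcup_{m<p}\NnRkm$ converges to the deterministic region $B_{k,p-1}$, which coincides with the maximal admissible region $\Delta_{k,p-1}=\{(x,y):0\le x\le y\le \pi_{k,p-1}x\}$. The ``miss'' half is structural: by scale invariance every cycle generated by an $m$-point cluster lies in $\Delta_{k,m}\subseteq\Delta_{k,p-1}$, so this part automatically misses any compact $K$ disjoint from $\Delta_{k,p-1}$. The ``hit'' half is a law-of-large-numbers statement: for any $K$ meeting the interior of $\Delta_{k,p-1}$, the expected number of $(p-1)$-cluster cycles landing in a neighborhood of $K$ diverges, while the normalized intensity converges to a deterministic measure whose support is all of $\Delta_{k,p-1}$; a second-moment/concentration bound then forces $\P\bigl(\bigcup_{m<p}\NnRkm\cap K\neq\emptyset\bigr)\to1$.

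Finally I would assemble the three parts. Because components of different sizes are spatially disjoint and, for the relevant scales, far apart, the three sub-processes are asymptotically independent (Poisson restriction to disjoint regions), so the hitting functional of the union factorizes in the limit. Verifying the two cases then completes the proof: if $K$ meets $\Delta_{k,p-1}$, the deterministic part forces the hitting probability to $1$; if $K$ is disjoint from $\Delta_{k,p-1}$, only the $m=p$ part contributes and the hitting probability converges to $1-e^{-\Lambda_p(K)}=\P\bigl((\Nkp\cup B_{k,p-1})\cap K\neq\emptyset\bigr)$. I expect the main obstacle to be the Poisson step: correctly identifying the limiting intensity $\Lambda_p$ through the interplay of persistence scale invariance and regular variation, and establishing the independence (Poisson) structure, while simultaneously controlling the dense-filling lower bound and the boundary behavior so that hitting-probability convergence holds on a convergence-determining class of compact sets.
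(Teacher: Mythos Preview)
Your overall decomposition by component size $m$ and the tools you invoke (Palm/Mecke expansion after anchoring and rescaling by $M$, Chen--Stein for the Poisson limit at $m=p$, Markov summation for $m>p$, second-moment concentration for $m<p$) are exactly those used in the paper. The moment asymptotic $\E|\Nnkm\cap A|\asymp n^m M^{d(m-1)}R^d f(R)^m$ you identify is the engine of Lemmas~\ref{l:expectation}--\ref{l:exp.variance}.

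There is one concrete error. You claim $B_{k,p-1}$ ``coincides with the maximal admissible region $\Delta_{k,p-1}$''. It does not: $B_{k,p-1}=\Delta_{k,p-1}\cap([0,b_{k,p-1}]\times\bbr_+)$ is a \emph{bounded} triangle, because the indicator $g_M(\cY,\cP_n)$ forces $\cC_1(\cY/M)$ to be connected, which caps the (rescaled) birth time at $b_{k,p-1}<\infty$. Your ``hit half'' assertion---that for any $K$ meeting the interior of $\Delta_{k,p-1}$ the expected count of $(p-1)$-cluster cycles in $K$ diverges---is therefore false for $K\subset\Delta_{k,p-1}\setminus B_{k,p-1}$ (large birth times): there the count is identically zero. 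Your dichotomy in the final assembly must be taken with respect to $B_{k,p-1}$, not $\Delta_{k,p-1}$; once corrected the two cases read ``$K\cap B_{k,p-1}\neq\emptyset$'' and ``$K\cap B_{k,p-1}=\emptyset$'', and in the latter all $m<p$ contributions vanish by the support constraint.

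There is also a methodological difference in how the pieces are glued. You propose to check hitting probabilities of the full union directly, splitting on whether $K$ meets the deterministic region and appealing to ``asymptotic independence'' of the sub-processes. The paper avoids this: it upgrades the subcritical convergence to \emph{convergence in probability} $\bigcup_{m<p}\Nnkm\stackrel{p}{\to}B_{k,p-1}$ in $\mathcal F(\Delta)$ (via the Chebyshev bound from Lemma~\ref{l:exp.variance}), combines this with the weak convergence $\bigcup_{m\ge p}\Nnkm\Rightarrow\Nkp$ to get joint convergence in the product space, and then applies the continuous mapping theorem for the union map $(F_1,F_2)\mapsto F_1\cup F_2$. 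This sidesteps any independence claim and the need to restrict to a convergence-determining class of compacta that ``skirt the boundary'' of $B_{k,p-1}$; it is both shorter and more robust than the direct case analysis you outline.
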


The weak limit in Theorem \ref{t:main.fell} consists of two parts.

\begin{itemize}
\item 
$\Nkp$ - a (finite) random closed set characterized as a \textit{Poisson random measure} on $\Delta$, whose mean measure (intensity) is given by 
\begin{equation}  \label{e:mean.measure.heavy}
\E \big( |\Nkp \cap A| \big) := \frac{s_{d-1}}{p! (\alpha p -d)}\, \int_{(\bbr^d)^{p-1}} g_1(0,\by) \muk_{(0,\by)} (A) d\by, \ \ \ A \subset \Delta,
\end{equation}
where $| \cdot |$ denotes cardinality of a given set, 
$s_{d-1}$ is the volume of the $(d-1)$-dimensional unit sphere in $\R^d$, $\by = (y_1,\ldots, y_{p-1}) \in (\bbr^d)^{p-1}$, $(0,\by) = (0,y_1,\ldots, y_{p-1}) \in (\bbr^d)^p$, and so, $g_1(0,\by) = g_1(0,y_1,\ldots, y_{p-1})$.
\vspace{10pt}
\item $B_{k,p-1}$ - a non-random closed set of $\Delta$ defined as follows.
Recall that for a given subsets of size $m$, the $k$th persistence pairs $(\gamma_b, \gamma_d)$ are limited to the region  $\Delta_{k,m} = \set{(x,y) :  0 \le x \le y\le \pi_{k,m}x}\subset \Delta$. Next, define
\[
	b_{k,m} := \sup \set{\gamma_b : (\gamma_b,\gamma_d)\in \PHk(\cY),\  |\cY|=m, \ \cC_1(\cY)\text{ is connected}},
\]
i.e.~$b_{k,m}$ is the largest birth time for $k$-cycles that are generated on $m$ vertices and  connected at unit radius. Finally, define
\[
	B_{k,m} := \Delta_{k,m} \cap \big([0,b_{k,m}]\times \R_+\big).
\]
In other words, $B_{k,m}$ is the area in which the $k$th persistence pairs generated by subsets of $m$ points that are connected at unit radius, may appear; see Figure \ref{fig:pers_limit}. 
Note that $B_{k,m}$ is non-decreasing in $m$, i.e., $B_{k,m_1} \subset B_{k,m_2}$ for all $m_1 \le m_2$. 
\end{itemize}

\begin{figure}
\includegraphics[scale=.25]{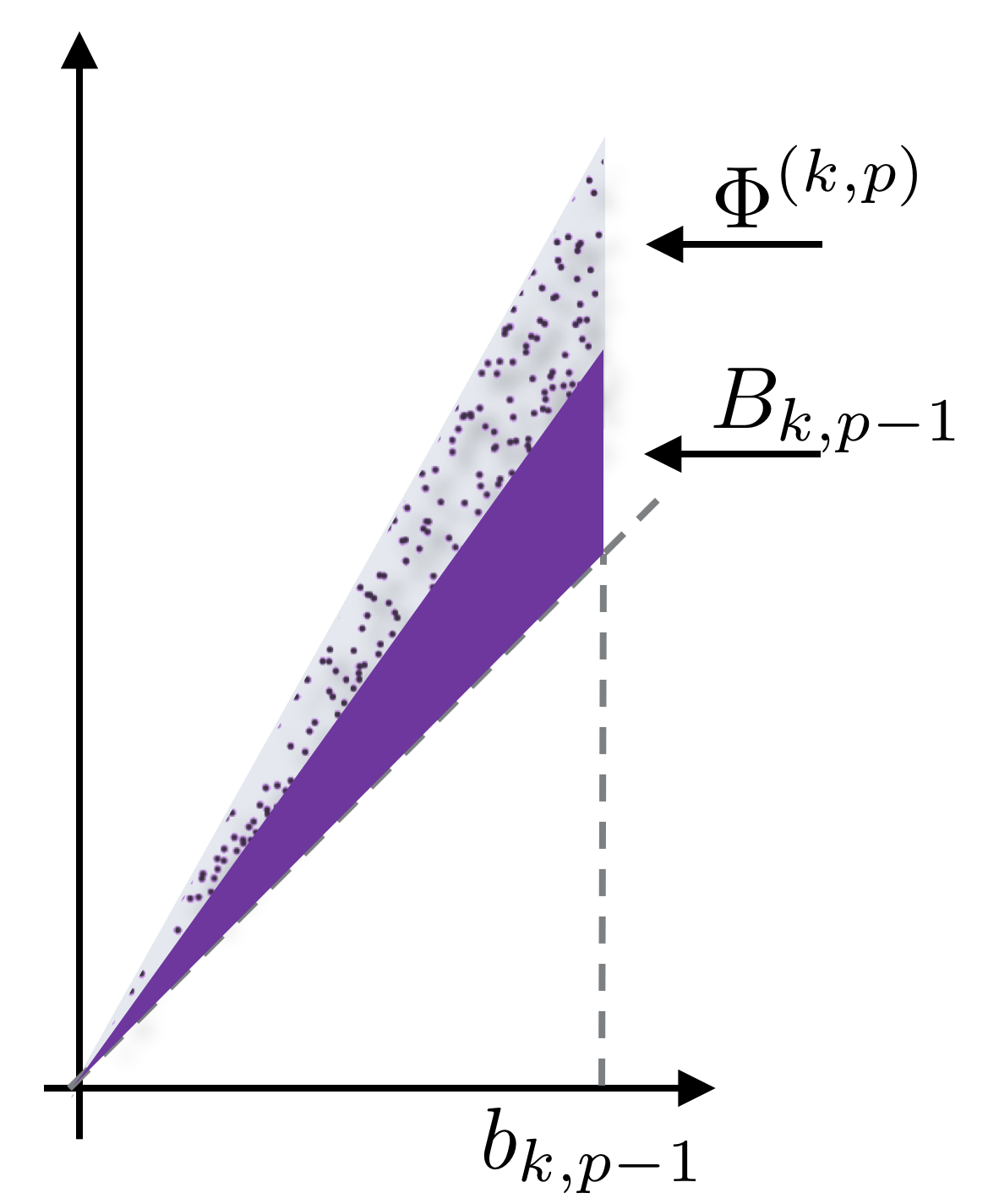}
\caption{\label{fig:pers_limit} \footnotesize{A cartoon for the limiting $k$th persistence diagram.
The solid area represents $B_{k,p-1}$, where the persistence pairs are dense enough, so that the entire region is covered. The grey area is $B_{k,p}\backslash B_{k,p-1}$, where only a finite number of Poisson points, denoted $\Phi^{(k,p)}$, are scattered in the limit. }}
\end{figure}

Let us provide some intuitions behind this theorem. As detailed in Lemmas \ref{l:expectation} and \ref{l:exp.variance}, we can show that for a measurable set $A \subset \Delta$ with $A \cap B_{k,k+2} \neq \emptyset$, 
\begin{equation}  \label{e:intuition1}
\E \big( | \Nnkp \cap A|  \big) \to \E \big( |\Nkp\cap A| \big) \in (0,\infty), \ \ \ n\to\infty,
\end{equation}
and as $n\to\infty$,
\begin{align}
\E \big( |\Nnkm \cap A | \big) &\to 0, \ \ \ \text{if } \ m >p,  \label{e:intuition2}  \\
\E \big(| \Nnkm \cap A | \big) &\to \infty, \ \ \ \text{if } \ m <p.   \label{e:intuition3}
\end{align}
Among these three results, the first indicates that there asymptotically exist at most finitely many persistence $k$-cycles that are generated on $p$ vertices in the complex $\cC_{M} (\cP_{n,R})$. Because of the rareness of persistence $k$-cycles, the set $\Nnkp$ will become ``Poissonian" in the limit. Additionally, \eqref{e:intuition2} implies that any $k$-cycles supported on more than $p$ vertices will vanish in the limit. In other words $\Nnkm$ converges to an empty set for all $m > p$. It thus follows that 
\begin{equation}  \label{e:quick.result1}
\bigcup_{m=p}^\infty \Nnkm \Rightarrow \Nkp \ \ \text{in }  \mathcal F(\Delta). 
\end{equation}
As for the remaining sets $\Nnkm$ for $m < p$, \eqref{e:intuition3} implies that there appear infinitely many persistence $k$-cycles as $n\to\infty$, that are generated on $m$ vertices in $\cC_{M}(\cP_{n,R})$. Accordingly, the union $\bigcup_{m=k+2}^{p-1} \Nnkm$ consists of infinitely many $k$th persistence pairs as $n\to\infty$, and ultimately, it converges to a deterministic closed set $\bigcup_{m=k+2}^{p-1}B_{k,m} = B_{k,p-1}$. Thus, 
\begin{equation}  \label{e:quick.result2}
\bigcup_{m=k+2}^{p-1} \Nnkm \Rightarrow B_{k,p-1} \ \ \text{in } \mathcal F(\Delta). 
\end{equation}
Finally combining \eqref{e:quick.result1} and \eqref{e:quick.result2} concludes \eqref{e:main.fell}. Section \ref{sec:proof.RV} gives a more formal argument. 

\begin{remark}
Note that \eqref{e:asym.equ} implicitly rules out a very quick decay of $M$. If $M$ decays to zero so quickly that 
$\limsup_{n\to\infty} n^pM^{d(p-1)} < \infty$, then \eqref{e:asym.equ} implies that 
$\liminf_{n\to\infty} \Rpn^d (f(\Rpn))^p>0$, but this contradicts with \eqref{e:RV}. At the same, the condition $M/R\to 0$ prevents a quick divergence of $M$. So the limiting behavior of $M$ is controlled on both sides.
\end{remark}
\begin{example}  \label{ex:RV.tail}
We consider a simple density with a Pareto tail, 
\begin{equation}  \label{e:Pareto.pdf}
f(x) = \frac{C}{1 + \|  x\|^\alpha}, 
\end{equation}
where $C$ is a normalizing constant. Taking $M \equiv 1$ and solving \eqref{e:asym.equ} with respect to $R$, we obtain
\begin{equation}  \label{e:RV.R}
R = (Cn)^{p/(\alpha p -d)}. 
\end{equation}
This sequence grows at a regularly varying rate with index $p/(\alpha p -d)$. Assuming \eqref{e:Pareto.pdf} together with other conditions in Theorem \ref{t:main.fell}, the weak convergence \eqref{e:main.fell} holds. 
\end{example}

Before concluding this section, we state three corollaries of Theorem \ref{t:main.fell}. In the first corollary we assume that  instead of \eqref{e:asym.equ}, $R$ and $M$ satisfy
\begin{equation}  \label{e:cond.noPRM}
n^p M^{d(p-1)} R^d( f(R))^p \to 0, \ \ \ n^{p-1} M^{d(p-2)} R^d( f(R))^{p-1} \to \infty \ \ \text{as } n\to\infty. 
\end{equation}
To see the difference between \eqref{e:asym.equ} and \eqref{e:cond.noPRM}, we simplify the situation by assuming \eqref{e:Pareto.pdf} and taking $M\equiv 1$. It is then elementary to show that the $R$ satisfying \eqref{e:cond.noPRM} grows faster than the right hand side of \eqref{e:RV.R}, that is, $R^{-1} (Cn)^{p/(\alpha p-d)} \to 0$ as $n\to\infty$. This means that unlike \eqref{e:intuition1}, we obtain $\E \big( |\Nnkp \cap A| \big) \to 0$ as $n\to\infty$, in which case the random part $\Nkp$ vanishes from the limit.  A formal statement is given below. 

\begin{corollary}  \label{cor:noPRM}
Suppose that instead of \eqref{e:asym.equ}, $R$ and $M$ satisfy \eqref{e:cond.noPRM}. 
Then, 
$$
\NnRk \Rightarrow B_{k,p-1} \ \ \text{in } \mathcal F (\Delta).
$$
\end{corollary}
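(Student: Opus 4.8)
The plan is to split $\NnRk$ along the layer index exactly as in the heuristic preceding Theorem~\ref{t:main.fell}, writing
\[
\NnRk \;=\; \Big( \bigcup_{m=p}^\infty \Nnkm \Big) \;\cup\; \Big( \bigcup_{m=k+2}^{p-1} \Nnkm \Big),
\]
and showing that under \eqref{e:cond.noPRM} the first (sparse) union now converges to the empty set, while the second (dense) union still converges to $B_{k,p-1}$. The only structural change from Theorem~\ref{t:main.fell} is that the first half of \eqref{e:cond.noPRM} pushes the formerly critical layer $m=p$ below threshold, so that the Poisson part $\Nkp$ is replaced by $\emptyset$; the deterministic part is untouched. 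A useful preliminary observation is that dividing the two halves of \eqref{e:cond.noPRM} gives $nM^df(R)\to 0$. Since by Lemma~\ref{l:expectation} the $m$th layer has first moment of order $n^mM^{d(m-1)}R^d(f(R))^m$, consecutive layers differ precisely by this factor $nM^df(R)$, so lower layers always dominate higher ones.

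For the sparse part I would argue via the first moment. Fix a compact $K\subset\Delta$; Markov's inequality gives
\[
\prob{\Big(\bigcup_{m=p}^\infty \Nnkm\Big)\cap K \neq \emptyset} \;\le\; \sum_{m=p}^\infty \E\big(|\Nnkm \cap K|\big).
\]
The leading term $m=p$ is of order $n^pM^{d(p-1)}R^d(f(R))^p\to 0$ by the first condition in \eqref{e:cond.noPRM}, and each subsequent layer carries an additional factor $(nM^df(R))^{m-p}\to 0$. Invoking the uniform-in-$m$ bound from Lemma~\ref{l:expectation} to control the tail of the series, the whole sum tends to $0$. As $\emptyset$ is deterministic, this is exactly the convergence $\bigcup_{m\ge p}\Nnkm\to\emptyset$ in probability in $\mathcal F(\Delta)$.

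For the dense part, the second condition in \eqref{e:cond.noPRM}, namely $n^{p-1}M^{d(p-2)}R^d(f(R))^{p-1}\to\infty$, is verbatim the statement \eqref{e:intuition3} for $m=p-1$; and because lower layers dominate, the same divergence $\E\big(|\Nnkm \cap A|\big)\to\infty$ holds for every $m\le p-1$. Hence the argument establishing \eqref{e:quick.result2} in the proof of Theorem~\ref{t:main.fell}, including the second-moment concentration that forces the regions $B_{k,m}$ to fill densely, applies without modification and yields $\bigcup_{m=k+2}^{p-1}\Nnkm \Rightarrow B_{k,p-1}$ in $\mathcal F(\Delta)$; since the limit is deterministic this is equivalently convergence in probability.

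Finally I would combine the two limits. Writing $S_n=\bigcup_{m=k+2}^{p-1}\Nnkm$, $T_n=\bigcup_{m\ge p}\Nnkm$, and $B:=B_{k,p-1}$, the elementary inclusions $(S_n\cup T_n)\setminus (B)^{\epsilon-}\subseteq\big(S_n\setminus (B)^{\epsilon-}\big)\cup T_n$ together with the monotonicity $(S_n)^{\epsilon-}\subseteq (S_n\cup T_n)^{\epsilon-}$ show that the two-sided $\epsilon$-envelope discrepancy (in the sense of \eqref{e:open.envelop}) between $S_n\cup T_n$ and $B$ is contained in the discrepancy between $S_n$ and $B$ together with $T_n$. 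Intersecting with any compact $K$ and applying the union bound, the resulting probability is dominated by the envelope probability for $S_n\to B$ plus $\prob{T_n\cap K\neq\emptyset}$, both of which vanish. Thus $S_n\cup T_n$ converges in probability, hence weakly, to $B_{k,p-1}$, which is the asserted conclusion. The one genuinely technical point is the uniform-in-$m$ estimate needed to sum $\sum_{m\ge p}\E(|\Nnkm\cap K|)$; every other ingredient is a direct specialization of the machinery already built for Theorem~\ref{t:main.fell}.
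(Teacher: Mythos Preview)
Your proposal is correct and follows exactly the route the paper intends: the authors omit the proof, stating it is ``very similar to that for Theorem~\ref{t:main.fell}'', and your adaptation---keeping Part~II intact, replacing the Poisson limit in Part~I by the empty set via a first-moment bound, then recombining---is precisely that adaptation. One small bibliographic correction: the uniform-in-$m$ summability estimate you need for $\sum_{m\ge p}\E(|\Nnkm\cap K|)$ is not in Lemma~\ref{l:expectation} itself but in the $T_3$ portion of the proof of Theorem~\ref{t:main.fell} (the Stirling/spanning-tree bound), and Lemma~\ref{l:exp.variance} is stated under \eqref{e:asym.equ}, so strictly speaking you must redo its variance bookkeeping under \eqref{e:cond.noPRM}---as you implicitly do, since the ratio $\mathrm{Var}/[\E]^2$ is controlled by $(n^{p-1}M^{d(p-2)}R^d f(R)^{p-1})^{-1}+(M/R)^d\to 0$ directly from \eqref{e:cond.noPRM} and $M/R\to 0$.
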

\remove{The intuition behind this corollary is the following. Denoting by $N_p$ the number of connected components in $\cC_{r,R}$ then we can show that $(\mean{N_p})^{-1} \sim n^p M^{d(p-1)} R^d( f(R))^p$.
If $R$ satisfies \eqref{e:asym.equ} then  $\mean{N_p}$ is finite, while $\mean{N_{m}} \to 0$ for $m>p$ and $\mean{N_{m}} \to \infty$ for $m<p$. The components of size $p$ constitute the $\Phi^{(k,p)}$ part in the limit (consisting a finite number of points), while the smaller components generate the $B_{k,p-1}$ (infinite) part.
Under the conditions of Corollary \ref{cor:noPRM}, we have that $\mean{N_p}\to 0$, therefore the finite part of the limit, i.e.~$\Phi^{(k,p)}$ vanishes.}

For the second corollary we again assume the condition at \eqref{e:asym.equ}.
We here aim to study the maximal lifespan (i.e.~death time -- birth time) of persistence $k$-cycles in the limiting persistence diagram. \remove{According to the basic philosophy in topological data analysis, the probabilistic features of a $k$-cycle with the longest (or at least sufficiently long) lifetime should be a central object to be analyzed. Indeed, such a $k$-cycle is believed to contain robust topological information, whereas the $k$-cycles of shorter lifetime are seen as mere topological noise. The standard machinery in extreme value theory using the continuous mapping theorem\footnote{ref?} enables us to use Theorem \ref{t:main.fell} to derive a limit theorem for the maxima as well as other marginals. Several examples, not necessarily relating to persistent homology, can be found in \cite{owada:adler:2016}. }For the required analyses, we need a continuous functional $T:\mathcal F(\Delta) \to \bbr_+$ defined by 
\begin{equation}  \label{e:vertical.distance}
T(F) = \sup_{(x,y)\in F} (y-x). 
\end{equation}
This functional captures the maximal vertical distance from the points in $F\subset \Delta$ to the diagonal line. 
For the remainder of this discussion, fix $t \in (0,b_{k,p-1})$, and define 
\[
\begin{split}
I_t &:=  \Delta \cap \bigl( [0,t] \times \bbr_+ \bigr),\\
J_t &:= \bigl\{ (x,y) \in  \Delta_{k,p} \cap I_t:y-x >T(\Delta_{k,p-1}\cap I_t) \bigr\}. 
\end{split}
\]
In other words, $J_t$ consists of points $(x,y)$ in $\Delta_{k,p} \cap I_t$ such that $y-x$ exceeds the maximal lifespan that can be attained by the points in $\Delta_{k,p-1}\cap I_t$. 
See Figure \ref{fig:maximal.lifespan}. 

\begin{figure}[!t]
\begin{center}
\includegraphics[scale=0.25]{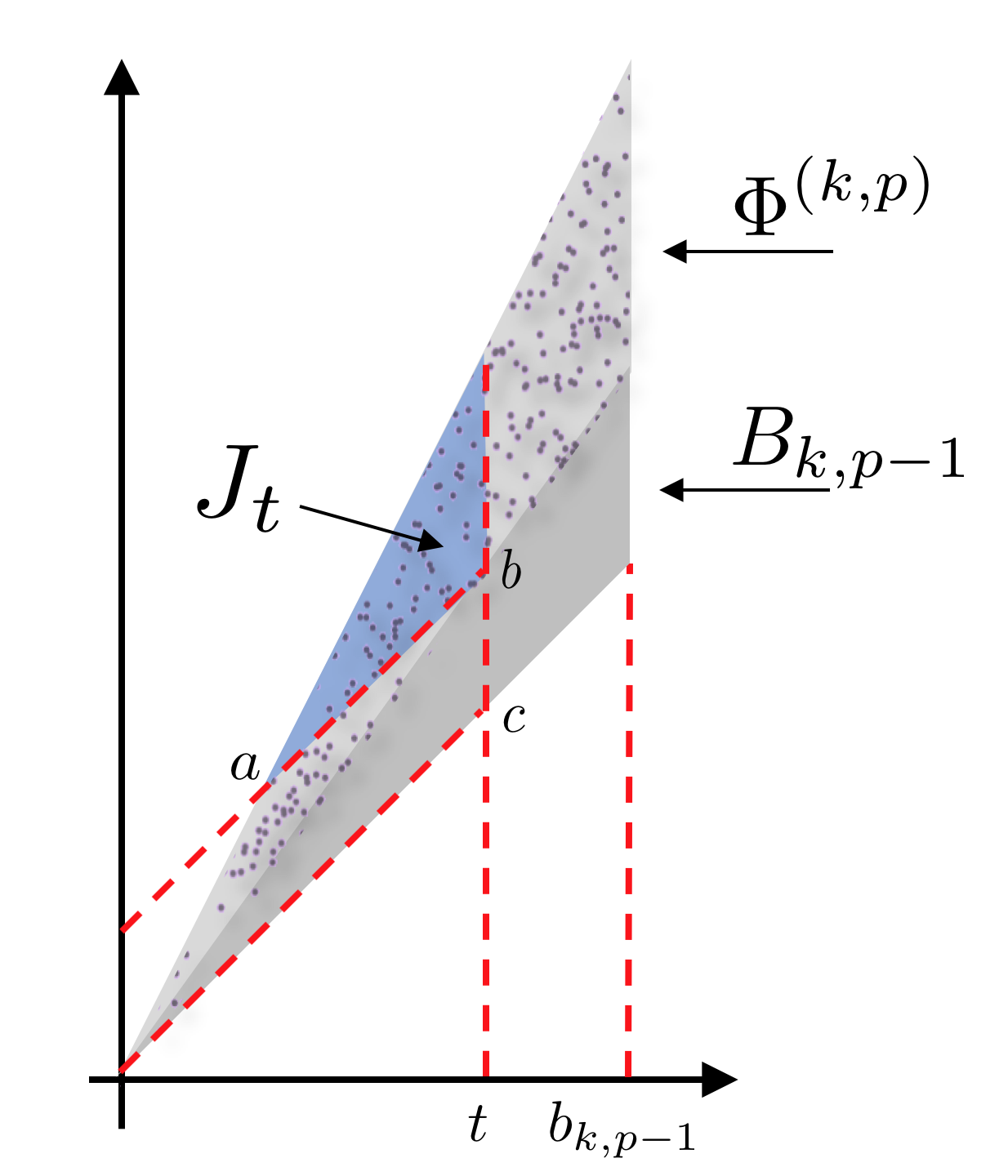}
\caption{{\footnotesize A dashed line segment $[a,b]$ is parallel to the diagonal line. The shaded area is $J_t$, in which there are at most finitely many Poisson points, representing persistence pairs with longer lifespan. The region $\Delta_{k,p-1} \cap I_t$ is densely covered by persistence pairs with shorter lifespan. If $J_t$ contains points, the maximal lifespan of persistence $k$-cycles can be attained by one of these points. If $J_t$ does not contain any points, the maximal lifespan is non-random and is equal to $T(\Delta_{k,p-1}\cap I_t)$, which is represented by a line segment $[b,c]$. }}
\label{fig:maximal.lifespan}
\end{center}
\end{figure}

The following corollary describes the limiting behavior of the $T(\NnRk \cap I_{Mt})$, i.e.~ the maximal lifespan of the persistence $k$-cycles generated by $\NnRk$, with the restriction that the birth time is less than $Mt$. The proof is immediate via continuous mapping theorem. Indeed applying a continuous functional \eqref{e:vertical.distance} to the weak convergence in Theorem \ref{t:main.fell} can yield the required result. 

\begin{corollary}  \label{cor:maximal}
Under the assumptions of Theorem \ref{t:main.fell}, we have
$$
T\bigl( \NnRk \cap I_{Mt} \bigr) \Rightarrow Z_t \ \ \text{in } \bbr_+, \ \ \ n\to\infty,
$$
where 
$$
Z_t = \begin{cases}
\  T(\Delta_{k,p-1}\cap I_t) &  \text{if }\Nkp  \cap J_t = \emptyset, \\
 \ T\bigl( \Nkp \cap J_t \bigr) & \text{if } \Nkp \cap J_t \neq \emptyset. 
\end{cases}
$$
\end{corollary}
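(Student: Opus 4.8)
The plan is to derive the corollary from Theorem \ref{t:main.fell} by the continuous mapping theorem, the only real work being to check that the functional in question is almost surely continuous, in the Fell topology, at the limiting random closed set $\Nkp \cup B_{k,p-1}$. First I would set up the right functional. Consider $\Psi(F) := T(F \cap I_t)$, where $T$ is the vertical-distance functional of \eqref{e:vertical.distance}; under the $M^{-1}$-scaling that is built into the definition of $\NnRk$, evaluating $\Psi$ on $\NnRk$ is exactly $T(\NnRk \cap I_{Mt})$ as written in the statement, and evaluating $\Psi$ on the limit is the quantity we must identify. A key preliminary reduction is that $\Psi$ only ever sees $F \cap K_t$, where $K_t := \Delta_{k,p}\cap I_t$. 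Because $\Delta_{k,p} = \{0\le x\le y\le \pi_{k,p}x\}$ is a cone of finite slope $\pi_{k,p}$ and $I_t$ caps the birth coordinate at $t$, the set $K_t$ is \emph{compact}; every persistence pair with birth at most $t$ — both in $\NnRk$ for large $n$ and in the limit — lies in $K_t$. This confines the supremum to a compact region and removes the usual escape-to-infinity obstruction to continuity of a supremum functional in the Fell topology.

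Next I would establish that $\Psi$ is continuous at $\Nkp \cup B_{k,p-1}$ almost surely, using the hit/miss characterization of Fell convergence. The \emph{hit} direction gives lower semicontinuity: the dense part $\bigcup_{m=k+2}^{p-1}\Nnkm$ becomes $\epsilon$-dense in $B_{k,p-1}$ (this is the content of the convergence \eqref{e:quick.result2}), so its maximal vertical distance on $I_t$ converges up to $T(B_{k,p-1}\cap I_t)$, and each individual Poisson point of $\Nkp$ is eventually hit, so its lifespan is recovered in the limit. The \emph{miss} direction gives the matching upper bound. The only way continuity of $\Psi$ at the limit can fail is if a point of $\Nkp$ lands exactly on one of the two critical lower-dimensional curves, namely $\{(x,y): y-x = T(\Delta_{k,p-1}\cap I_t)\}$ or the vertical boundary $\{x=t\} = \partial I_t$. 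Since the mean measure \eqref{e:mean.measure.heavy} is absolutely continuous — it is obtained by integrating against the density $f$ over $(\bbr^d)^{p-1}$, and birth/death radii depend continuously on the configuration — it assigns zero mass to each of these one-dimensional curves, so almost surely $\Nkp$ avoids them. This yields almost-sure continuity of $\Psi$ at the limit.

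With continuity in hand, the continuous mapping theorem applied to Theorem \ref{t:main.fell} gives $T(\NnRk \cap I_{Mt}) = \Psi(\NnRk) \Rightarrow \Psi(\Nkp \cup B_{k,p-1})$, and it remains to identify the limit. Since $\Psi$ distributes over the union, $\Psi(\Nkp \cup B_{k,p-1}) = \max\{\,T(B_{k,p-1}\cap I_t),\ T(\Nkp\cap I_t)\,\}$. For $t\in(0,b_{k,p-1})$ we have $B_{k,p-1}\cap I_t = \Delta_{k,p-1}\cap I_t$, so the first term equals the deterministic value $T(\Delta_{k,p-1}\cap I_t)$. By the very definition of $J_t$, the points of $\Nkp \cap I_t$ whose lifespan strictly exceeds $T(\Delta_{k,p-1}\cap I_t)$ are precisely those in $\Nkp \cap J_t$. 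Hence if $\Nkp \cap J_t = \emptyset$ the maximum is the deterministic value, while if $\Nkp \cap J_t \neq \emptyset$ the maximum is attained inside $J_t$ and equals $T(\Nkp \cap J_t)$; this is exactly the piecewise description of $Z_t$.

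The main obstacle is the almost-sure continuity step. Both restriction to a fixed set and passage to a supremum are, in general, discontinuous operations on $\mathcal F(\Delta)$, so the entire argument hinges on showing that the set of discontinuities of $\Psi$ has probability zero under the law of $\Nkp \cup B_{k,p-1}$. This in turn reduces to the no-atom property of the Poisson intensity on the two critical boundary curves and to the fact that the dense region $B_{k,p-1}$ is approached from the inside (so that its contribution to the supremum converges to the correct deterministic constant rather than overshooting). Once these two points are secured, the rest is bookkeeping via the continuous mapping theorem.
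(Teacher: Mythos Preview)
Your proposal is correct and follows the same route as the paper: apply the continuous mapping theorem to the Fell-topology convergence in Theorem \ref{t:main.fell} via the vertical-distance functional $T$. The paper in fact declines to write out a proof, stating only that ``the proof is immediate via continuous mapping theorem'' and that applying the continuous functional \eqref{e:vertical.distance} to Theorem \ref{t:main.fell} yields the result; your careful treatment of almost-sure continuity of $F\mapsto T(F\cap I_t)$ at the limit (compactness via $\Delta_{k,p}\cap I_t$, the null-mass of the boundary curves under the intensity \eqref{e:mean.measure.heavy}, and the approach of the dense region from the inside) simply fills in the details the paper leaves implicit.
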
 
The last statement  says that if the limiting Poisson random measure $\Nkp$ has no points in $J_t$, the weak limit $Z_t$ takes a purely deterministic value, and the ``non-random" set $\Delta_{k,p-1} \cap I_t$ yields the maximal lifespan. On the other hand, if $\Nkp$ has at least one points in $J_t$, the corresponding lifespan is necessarily longer than $T(\Delta_{k,p-1}\cap I_t)$. Then, the actual value of $Z_t$ is random. 

For the third corollary, recall that $\bigcup_{m=p}^\infty \Nnkm$ asymptotically consists of the $k$th persistence pairs that are generated by a finite number of components of size $p$. Since the number of persistence pairs is always finite, the weak convergence can be reformulated as that in the space $\text{MP}(\Delta)$ of locally finite counting measures on $\Delta$. We here equip $\text{MP}(\Delta)$ with the vague topology (see \cite{resnick:1987}).
\begin{corollary}  \label{cor:vague}
Under the conditions of Theorem \ref{t:main.fell}, we take $\NnRkm$ as a point process. Then  
$$
\sum_{m=p}^\infty \NnRkm \Rightarrow \Nkp \ \ \text{in } \text{MP}(\Delta).
$$
\end{corollary}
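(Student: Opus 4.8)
The plan is to upgrade the random--closed--set (Fell) convergence already established in Theorem~\ref{t:main.fell} to convergence of counting measures in the vague topology, by appealing to a standard criterion for convergence to a Poisson process. Write $\Psi_n := \sum_{m=p}^\infty \NnRkm$, regarded now as a point process on $\Delta$, and let $\nu(\cdot) := \E\big(|\Nkp \cap \cdot|\big)$ denote the intensity measure given in \eqref{e:mean.measure.heavy}. Three structural facts are available at the outset: $\Psi_n$ is almost surely a simple, locally finite point process (each $\NnRkm$ is simple, and with probability one no two cycles---whether arising from components of equal or of different sizes---produce an identical birth--death pair, since $f$ is continuous); $\nu$ is a Radon measure on $\Delta$; and $\nu$ is atomless, because the map $(0,\by)\mapsto(\gamma_b,\gamma_d)$ recording the persistence endpoints varies continuously with the configuration, so \eqref{e:mean.measure.heavy} has no atoms. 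Hence $\Nkp$ is a simple Poisson process whose law is determined by its avoidance function. The criterion I would invoke (e.g.\ Kallenberg's theorem, or Proposition~3.22 in \cite{resnick:1987}) then gives $\Psi_n \Rightarrow \Nkp$ in $\mathrm{MP}(\Delta)$ provided that, for every set $A$ in a convergence--determining class of relatively compact $\nu$--continuity sets,
\[
\E\big(\Psi_n(A)\big)\to \nu(A)
\qquad\text{and}\qquad
\prob{\Psi_n(A)=0}\to e^{-\nu(A)}.
\]
I would take this class to consist of finite unions of relatively compact rectangles $\big([a,b]\times[c,d]\big)\cap\Delta$ whose boundary is $\nu$--null; these form a dissecting ring, and since $\nu$ is atomless almost every choice of defining coordinates yields a continuity set.

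For the mean convergence, split
\[
\E\big(\Psi_n(A)\big)=\E\big(\Nnkp(A)\big)+\sum_{m=p+1}^\infty \E\big(\Nnkm(A)\big).
\]
The first term converges to $\nu(A)$ by Lemma~\ref{l:expectation}, i.e.\ by \eqref{e:intuition1}. Each summand in the remaining sum tends to $0$ by \eqref{e:intuition2}, so it only remains to justify interchanging the limit with the infinite sum. This follows from a dominated--convergence argument in $m$: the estimates behind Lemmas~\ref{l:expectation} and~\ref{l:exp.variance} bound $\E\big(\Nnkm(A)\big)$ by a combinatorial factor times $n^m M^{d(m-1)}R^d(f(R))^m$, which under \eqref{e:asym.equ} equals the critical quantity $n^p M^{d(p-1)}R^d(f(R))^p\to 1$ multiplied by $(nM^d f(R))^{m-p}$; since $nM^d f(R)\to 0$, the tail decays geometrically in $m$, uniformly in $n$, legitimizing the interchange. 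Hence $\E\big(\Psi_n(A)\big)\to \nu(A)$.

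The void probabilities are where the argument draws its strength directly from Theorem~\ref{t:main.fell}. By \eqref{e:quick.result1} we already have $\bigcup_{m=p}^\infty \NnRkm \Rightarrow \Nkp$ in $\mathcal F(\Delta)$, and weak Fell convergence of random closed sets is precisely convergence of the hitting functionals $\prob{S_n\cap K\neq\emptyset}\to\prob{\Nkp\cap K\neq\emptyset}$ for every compact $K$ that is a continuity set of the limiting capacity functional. For a relatively compact $A$ in the class above we have the identity $\{\Psi_n(A)=0\}=\{(\bigcup_{m\ge p}\NnRkm)\cap A=\emptyset\}$, and since $\nu$ is atomless the expected number of limiting points on $\partial A$ is $\nu(\partial A)=0$, so $\overline{A}$ is almost surely not hit on its boundary by $\Nkp$; thus the open and closed avoidance events agree in the limit and
\[
\prob{\Psi_n(A)=0}\ \longrightarrow\ \prob{\Nkp\cap A=\emptyset}=e^{-\nu(A)},
\]
the last equality being the void probability of a Poisson process. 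Combining the two displays with the cited criterion yields $\Psi_n\Rightarrow\Nkp$ in $\mathrm{MP}(\Delta)$.

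I expect the main obstacle to be the careful transfer of the avoidance functionals between the two topologies: Fell convergence controls hitting probabilities of \emph{compact} sets, whereas the vague (point--process) criterion is phrased through counts on relatively compact Borel sets, so one must set up a genuine convergence--determining ring of $\nu$--continuity sets and verify that boundary effects are negligible---this is exactly where atomlessness of $\nu$ and simplicity of $\Psi_n$ are used. The remaining delicate point is the uniform--in--$n$ summability over $m$ needed for the mean convergence; though routine given the moment bounds from the proof of Theorem~\ref{t:main.fell}, it is the only place where the argument must reach beyond the already--established Fell limit.
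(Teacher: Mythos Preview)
Your proposal is correct and shares the overall scaffold with the paper: both invoke Kallenberg's criterion (Proposition~3.22 in \cite{resnick:1987}) and check mean convergence plus void-probability convergence. The mean step is essentially identical---your geometric-in-$m$ tail bound is exactly the $T_3\to 0$ estimate from Part~I of the proof of Theorem~\ref{t:main.fell}, combined with Lemma~\ref{l:expectation} for the $m=p$ term. The genuine difference is in the void probabilities. You deduce $\P(\Psi_n(A)=0)\to e^{-\nu(A)}$ by passing through the Fell convergence \eqref{e:quick.result1} and then transferring capacity-functional convergence on compact sets to avoidance probabilities on open continuity sets via a boundary argument. The paper instead skips this topological transfer entirely: it reopens Part~I of the proof of Theorem~\ref{t:main.fell} and reads off
\[
\P\Bigl(\sum_{m\ge p}\Nnkm(A)=0\Bigr)=\P\bigl(\Nnkp(A)=0\bigr)+o(1)=\P\bigl(\Ntnkp(A)=0\bigr)+o(1)\to \P\bigl(\Nkp(A)=0\bigr)
\]
directly from $T_3\to 0$, $T_2\to 0$, and $T_1\to 0$ respectively. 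Your route is more modular---it treats Part~I as a black box and would work whenever the Fell limit is already in hand---but it buys this at the cost of the boundary bookkeeping you flag, plus the need to verify that $\nu$ is diffuse (your justification ``the map varies continuously'' is not quite enough, since continuity alone does not rule out atoms; what is really needed is that the level sets of $\by\mapsto(\gamma_b,\gamma_d)$ have Lebesgue measure zero). The paper's route is shorter precisely because it reuses the intermediate estimates rather than the packaged Fell statement.
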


\bigskip

\section{Exponentially decaying tails case}  \label{sec:exp.tail}

In the present section we wish to study the case where the distribution generating random points has an exponentially decaying tail.
The results in this case are  parallel to those of the previous section except for the normalization and limiting distributions. 

To define the density function, we use the \textit{von-Mises function}. The following setup is somewhat typical in EVT; see \cite{balkema:embrechts:2007,balkema:embrechts:nolde:2010,owada:adler:2017,owada:2017}. 

\begin{definition}[von-Mises function]\label{def:von_mises}
We say that $\psi:\R_+\to \R$ is a \emph{von-Mises function}, if $\psi$ is $C^2$, $\psi'(z) >0$, and 
\[
	\lim_{z\to\infty} \psi(z) = \infty,\qquad \lim_{z\to\infty}\frac{d}{dz}\left(\frac{1}{\psi'(z)}\right) = 0.
\]
\end{definition}
In this section we study density functions $f:\R^d\to\R_+$ of the form
\begin{equation}\label{eq:f_vm}
f(x)  = L(\| x \|) e^{-\psi(\| x\|)}. 
\end{equation}
Let $a(z):= 1/\psi^\prime(z)$, then from Definition \ref{def:von_mises} we have 
that $a^\prime(z) \to 0$, $z\to\infty$. Therefore,
\begin{equation}  \label{e:cesaro}
\lim_{z\to\infty}\frac{a(z)}{z} = 0.
\end{equation}
We assume that $L:\bbr_+ \to \bbr_+$ is \textit{flat} for $a$, that is,
\begin{equation}  \label{e:flat}
\lim_{t\to\infty}\frac{L\bigl( t+a(t)v \bigr)}{L(t)}= 1, 
\end{equation}
uniformly on $v \in [-K,K]$ for every $K>0$. 
Furthermore we assume that for some $\gamma\geq0$, $z_0 > 0$ and $C\geq 1$, we have
\begin{equation}  \label{e:L.gamma}
\frac{L(zt)}{L(z)} \leq Ct^\gamma \ \ \text{for all } t>1, z\geq z_0. 
\end{equation}

Condition \eqref{e:flat} together with \eqref{eq:f_vm}  implies that the tail of $f$ is determined by the function $\psi$ (or equivalently $a$), and is independent of $L$. Thus, we can classify $f$ in terms of the asymptotics of $a$. If $a(z)$ converges to a positive, finite constant as $z\to\infty$, we say that $f$ has an (asymptotic) exponential tail. If $a(z)$ diverges as $z\to\infty$ we say that $f$ has a \textit{subexponential} tail, and finally, if $a(z) \to0$, we say that $f$ has a \textit{superexponential} tail.

%
As in the previous section, we need to choose a radius $R=R(n)$ and connectivity value $M=M(n)$, for topological crackle to occur.  In \cite{owada:adler:2017} it was shown that the occurrence of topological crackle  depends on the limit
$$
c := \lim_{n\to\infty} \frac{a(R)}{M}.
$$
In particular, if $c=0$,
 topological crackle never occurs, and random points are densely scattered near the origin, so that placing unit balls around the points constitutes a topologically contractible object called \textit{core}; see  \cite{adler:bobrowski:weinberger:2014}.  Since the main focus of the present work is topological crackle, we do not treat the case $c=0$ and always assume  $c \in (0,\infty]$. 
By definition, if $M$ is a positive constant and $c\in (0,\infty]$, then $f$ never has a superexponential tail. 

We now describe  a series of results analogous to those in the previous sections. The proof is presented in Section \ref{sec:proof.exp.tail}. 

\begin{theorem}  \label{t:main.fell.exponential}
Let $f$ be a probability density function of the form \eqref{eq:f_vm}, and 
suppose that $R=R(n),M=M(n)$ are chosen such that $R\to \infty$, $M/R\to 0$, $a(R)/M\to c\in(0,\infty]$ as $n\to\infty$. Moreover for some integer $p \ge k+2$,
\begin{equation}  \label{e:asym.equ.exponential}
n^p M^{d(p-1)} a(\Rpn) \Rpn^{d-1}( f(R))^p \to 1, \ \ \ n\to\infty.
\end{equation}
Then,
$$
\NnRk \Rightarrow \Nkp \cup B_{k,p-1}\ \ \text{in } \mathcal F (\Delta),
$$
where $\Nkp$ is defined below, and $B_{k,p-1}$ is the same non-random set as in Theorem \ref{t:main.fell}. 
\end{theorem}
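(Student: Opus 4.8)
The plan is to follow the architecture of the proof of Theorem~\ref{t:main.fell} almost verbatim, isolating the one place where the exponential tail genuinely changes the analysis: the first-order asymptotics of the expected number of persistence pairs. As in the heuristic \eqref{e:quick.result1}--\eqref{e:quick.result2}, I would write $\NnRk=\bigcup_{m=k+2}^\infty\Nnkm$ and treat separately the \emph{sparse} part $\bigcup_{m\ge p}\Nnkm$, which should converge to the Poisson random set $\Nkp$, and the \emph{dense} part $\bigcup_{m=k+2}^{p-1}\Nnkm$, which should fill the deterministic set $B_{k,p-1}$. The split, and the identification of the threshold $p$, is governed by re-proving the three expectation estimates \eqref{e:intuition1}--\eqref{e:intuition3} under the new normalisation \eqref{e:asym.equ.exponential}. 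Once these are in hand, the Poisson approximation (via factorial moments), the dense fill-in (via a second-moment bound), and the gluing of the two parts through the hit-and-miss characterisation of the Fell topology are structurally identical to the regularly-varying case, so I would only indicate the modifications.

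First I would compute expectations. Because $\cP_n$ is Poisson of intensity $nf$, the multivariate Mecke formula gives, for measurable $A\subset\Delta$,
\[
\E\big(|\Nnkm\cap A|\big)=\frac{n^m}{m!}\int_{(\R^d)^m}\one\set{\min_i\|x_i\|\ge R}\,\muk_{\bx}(MA)\,\E\big[g_M(\bx,\bx\cup\cP_n)\big]\prod_{i=1}^m f(x_i)\,d\bx.
\]
Writing one point in polar coordinates, $x_1=\rho\theta$, and the rest as $x_j=x_1+My_{j-1}$, the Jacobian yields $M^{d(m-1)}$ and, by scale invariance, $\muk_{\bx}(MA)=\muk_{(0,\by)}(A)$; the factor $\E[g_M(\bx,\bx\cup\cP_n)]$ splits into the connectivity indicator, which rescales to $g_1(0,\by)$, and a no-other-point (void) probability, which tends to $1$ because \eqref{e:asym.equ.exponential} and $a(R)/R\to0$ (from \eqref{e:cesaro}) give $nf(R)M^d\to0$. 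The decisive difference from the heavy-tailed case is that here $M\asymp a(R)$: since $\|x_j\|=\rho+M\langle\theta,y_{j-1}\rangle+O(M^2/\rho)$ and $\psi'(\rho)M=M/a(\rho)\to1/c$, the density of each satellite point relative to the anchor tends to $e^{-\langle\theta,y_{j-1}\rangle/c}$ by flatness~\eqref{e:flat}; thus a cluster weight $e^{-\langle\theta,\sum_i y_i\rangle/c}$—absent in the regularly-varying regime ($M/R\to0$)—survives the limit, as does the hard constraint $\min_i\|x_i\|\ge R$. A Laplace computation on the anchor's radial variable, substituting $\rho=R+a(R)v$ and invoking the self-neglecting relation $a(R+a(R)v)/a(R)\to1$, reduces $\int f(\rho)^m\rho^{d-1}\,d\rho$ to order $a(R)R^{d-1}f(R)^m$, the precise constant $\kappa_m(A)$ absorbing the $c$-dependent weight and the boundary shift from the constraint. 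Collecting terms gives $\E(|\Nnkm\cap A|)\sim n^m M^{d(m-1)}a(R)R^{d-1}f(R)^m\cdot\kappa_m(A)$. Since the successive ratio of these quantities is $nf(R)M^d\to0$, \eqref{e:asym.equ.exponential} forces the expectation to a finite positive limit for $m=p$, to $0$ for $m>p$, and to $\infty$ for $m<p$; the $m=p$ limit identifies the intensity of $\Nkp$, which now carries the weight $e^{-\langle\theta,\cdot\rangle/c}$ (collapsing to a purely radial constant when $c=\infty$) in place of the factor $1/(\alpha p-d)$ of Theorem~\ref{t:main.fell}.

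With the three estimates established, the remaining steps mirror the heavy-tailed proof. For the sparse part I would show $\sum_{m\ge p}\Nnkm\Rightarrow\Nkp$ by verifying that all factorial moments factorise: two size-$p$ clusters are, with probability $1-o(1)$, separated at scale $M$ and hence contribute independently, so the void probabilities converge to those of a Poisson random measure with the intensity found above, and the passage from vague convergence of simple point processes to Fell convergence of their supports (cf.~\cite{molchanov:2005}) yields convergence in $\mathcal F(\Delta)$. For the dense part, the divergence $\E(|\Nnkm\cap A|)\to\infty$ ($m<p$) together with the variance estimate (the exponential analogue of Lemma~\ref{l:exp.variance}) makes $\Nnkm$ hit every open set meeting $B_{k,m}$ with probability tending to $1$, while every persistence pair it produces lies in $\Delta_{k,m}$ with birth coordinate at most $b_{k,m}$; hence $\bigcup_{m=k+2}^{p-1}\Nnkm\Rightarrow B_{k,p-1}$. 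Finally, since Fell convergence is implied by $\P(\NnRk\cap K=\emptyset)\to\P((\Nkp\cup B_{k,p-1})\cap K=\emptyset)$ for compact $K$, and for $K$ disjoint from $B_{k,p-1}$ the dense part misses $K$ deterministically in the limit while for $K$ meeting $B_{k,p-1}$ the left side already vanishes, the two limits glue exactly as in \eqref{e:quick.result1}--\eqref{e:quick.result2}.

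The main obstacle is the expectation asymptotics of the second paragraph: making the Laplace/Cram\'er-type reduction of the radial integral rigorous and, above all, uniform across the full range $c\in(0,\infty]$. One must control the regime where the linearisation $\psi(\rho+a(\rho)v)-\psi(\rho)\approx v$ degrades for large $v$, and simultaneously track the interaction between the cluster weight $e^{-\langle\theta,\sum_i y_i\rangle/c}$ and the shifted lower limit of integration produced by the constraint $\min_i\|x_i\|\ge R$ (a coupling that is invisible in the heavy-tailed case but of leading order here because $M$ and $a(R)$ are comparable). This is exactly where the growth bound \eqref{e:L.gamma} and the self-neglecting property of $a$ are needed to build an integrable dominating function and justify dominated convergence, and where the degenerate endpoint $c=\infty$ must be handled so as not to lose uniformity.
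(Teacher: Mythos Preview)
Your proposal is correct and mirrors the paper's strategy: Section~\ref{sec:proof.exp.tail} explicitly declares that the argument is parallel to the regularly-varying case, with the sole nontrivial modification being the moment asymptotics, and your second paragraph reproduces the key ingredients of Lemma~\ref{l:expectation.exponential} (the substitution $r=R+a(R)\rho$, the satellite weight $e^{-c^{-1}\langle\theta,y_i\rangle}$, the boundary indicator from $\min_i\|x_i\|\ge R$, and the dominated-convergence difficulty) almost exactly. Two small tactical differences worth noting: the paper's Poisson limit (Part~I of the proof of Theorem~\ref{t:main.fell}) goes through Stein's method on a dependency graph together with the auxiliary process $\Ntnkp$ rather than factorial moments, and the gluing (Part~III) invokes metrizability of $\mathcal F(\Delta)$ plus continuity of the union map $(F_1,F_2)\mapsto F_1\cup F_2$ rather than a direct hit-and-miss case split on whether $K$ meets $B_{k,p-1}$---but both of your variants would also succeed.
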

Similarly to Theorem \ref{t:main.fell}, the limit $\Nkp$ above is a (finite) random closed set characterized as a 
Poisson random measure. Here, the mean measure of $\Nkp$ is given by
\begin{align}
\frac{1}{p!}\, \int_0^\infty d\rho &\int_{S^{d-1}} \hspace{-5pt}J(\ta)d\ta \int_{(\bbr^d)^{p-1}}\hspace{-5pt}d\by\, g_1(0,\by)\, \muk_{(0,\by)} (\cdot) \label{e:mean.meas.light} \\
& \times e^{-p\rho - c^{-1} \sum_{i=1}^{p-1} \langle \ta, y_i \rangle}\, \one \bigl\{ \rho + c^{-1} \langle \ta, y_i \rangle \geq 0, \ i=1,\dots,p-1 \bigr\},  \notag
\end{align}
where $\langle \cdot, \cdot \rangle$ denotes scalar product and
\begin{equation}  \label{e:Jacobian}
J(\theta) = \sin^{d-2}(\theta_1) \sin^{d-3} (\theta_2) \cdots \sin (\theta_{d-2})
\end{equation} 
is the Jacobian. 
Interestingly, if $c=\infty$, \eqref{e:mean.meas.light} coincides with \eqref{e:mean.measure.heavy} up to multiplicative constants, implying that the two limiting Poisson random measures coincide regardless of heaviness of the tail of an underlying distribution. 

Notice that the main difference between \eqref{e:asym.equ} and \eqref{e:asym.equ.exponential} lies only in the growth rate of $\Rpn$. To see this, take $M\equiv 1$, and consider the simple example 
$$
f(x) = Ce^{-\| x \|^\tau/\tau}, \quad 0 < \tau \leq 1.
$$
Then $a(z) = z^{1-\tau}$,  and the solution to \eqref{e:asym.equ.exponential} is given by 
$$
\Rpn = \bigl( \tau \log n + p^{-1}(d-\tau) \log (\tau \log n) + \tau \log C \bigr)^{1/\tau},
$$
which grows logarithmically, whereas, as seen in Example \ref{ex:RV.tail}, the $\Rpn$ in the heavy tail setup grows at a regularly varying rate. 


We now present the statements equivalent to those in Corollaries \ref{cor:noPRM}, \ref{cor:vague}, and \ref{cor:maximal}. 
\begin{corollary} 
Suppose that instead of \eqref{e:asym.equ.exponential}, $R$ and $M$ satisfy 
\begin{align*}
n^p M^{d(p-1)} a(R) R^{d-1} (f(R))^p &\to 0, \ \ \ n\to\infty,\\
n^{p-1} M^{d(p-2)} a(R) R^{d-1} (f(R))^{p-1} &\to \infty, \ \ \ n\to\infty. 
\end{align*}
Then
$$
\NnRk  \Rightarrow B_{k,p-1} \ \ \text{in } \mathcal F(\Delta). 
$$
\end{corollary}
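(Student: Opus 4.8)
The plan is to follow the proof of Theorem~\ref{t:main.fell.exponential} essentially verbatim, the sole difference being that the balance condition \eqref{e:asym.equ.exponential} is replaced by the two-sided condition of the present corollary. Heuristically, the replacement shifts the ``phase boundary'' by one unit: under \eqref{e:asym.equ.exponential} the expected number of persistence $k$-cycles built on exactly $p$ vertices converges to a positive constant (this is what produced the Poisson part $\Nkp$), whereas under the first condition of the corollary it now tends to $0$, so $\Nkp$ collapses to the empty set, while the deterministic part $B_{k,p-1}$ is untouched. This is precisely the exponential-tail analogue of the passage from Theorem~\ref{t:main.fell} to Corollary~\ref{cor:noPRM}. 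Concretely I would split
\[
\NnRk = \Big( \bigcup_{m=k+2}^{p-1}\Nnkm \Big) \cup \Big( \bigcup_{m=p}^{\infty}\Nnkm \Big),
\]
and prove that the first (``dense'') union converges to $B_{k,p-1}$ and the second (``sparse'') union converges to $\emptyset$, both in $\mathcal F(\Delta)$, replacing \eqref{e:quick.result2} and \eqref{e:quick.result1} respectively.

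For the sparse part I would reuse the mean-measure computation from the proof of Theorem~\ref{t:main.fell.exponential} (the expression \eqref{e:mean.meas.light}): for a fixed measurable $A\subset\Delta$ the count $\E\big(|\Nnkm\cap A|\big)$ behaves, up to a positive constant depending on $A$, like $n^m M^{d(m-1)} a(R) R^{d-1}(f(R))^m$. For $m=p$ this is exactly the left-hand side of the first condition of the corollary, so it tends to $0$; the ratio of consecutive terms equals, up to constants, $nM^d f(R)$, and dividing the two conditions of the corollary shows $nM^d f(R)\to 0$, whence every term with $m>p$ decays even faster. Summing over $m\ge p$ with the uniform-in-$m$ tail bound already used in the main proof gives $\E\big(|\big(\bigcup_{m\ge p}\Nnkm\big)\cap K|\big)\to 0$ for every compact $K\subset\Delta$, and Markov's inequality yields $\P\big(\big(\bigcup_{m\ge p}\Nnkm\big)\cap K\neq\emptyset\big)\to 0$. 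By the hitting-functional criterion for random closed sets this is exactly $\bigcup_{m\ge p}\Nnkm\Rightarrow\emptyset$.

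For the dense part nothing changes relative to the main proof except the justification of divergence. The second condition of the corollary gives $\E\big(|\Nnkpone\cap A|\big)\to\infty$, and since $nM^d f(R)\to 0$ the same divergence holds for every $m$ with $k+2\le m\le p-1$; this is the analogue of \eqref{e:intuition3}. Combined with the deterministic confinement $\Nnkm\subset B_{k,m}\subset B_{k,p-1}$ (the rescaling by $M$ places every $m$-vertex pair inside the region attainable at unit connectivity radius), the argument establishing \eqref{e:quick.result2} applies unchanged and gives $\bigcup_{m=k+2}^{p-1}\Nnkm\Rightarrow B_{k,p-1}$. The combination of the two pieces is then easier than in Theorem~\ref{t:main.fell.exponential}, because \emph{both} limits are now non-random: convergence in distribution to a constant is convergence in probability, joint convergence in probability of the pair is automatic, and continuity of the union map in the Fell topology gives $\NnRk\Rightarrow B_{k,p-1}\cup\emptyset=B_{k,p-1}$. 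Equivalently one may verify the hit/miss criteria directly: the ``hit'' requirement for an open $G$ meeting $B_{k,p-1}$ comes from the densification of the dense part, and the ``miss'' requirement for a compact $K$ disjoint from $B_{k,p-1}$ now follows cleanly from the confinement of the dense part together with the vanishing of the sparse part, with no stray points to rule out.

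I expect the main obstacle to be the ``hit'' half of the dense-part convergence: first-moment divergence alone does not force the points to become dense in every subregion of $B_{k,p-1}$, so it must be supplemented by a concentration argument. The natural route is the second-moment/variance estimate proved in Section~\ref{sec:proof.exp.tail}, or a Poisson approximation showing that the count in any fixed open ball inside $B_{k,p-1}$ is asymptotically Poisson with mean tending to infinity, so that its emptiness probability tends to $0$. The remaining work -- the uniform-in-$m$ tail bound for the sparse sum and the verification that the constants in the mean-measure asymptotics are strictly positive on sets meeting $B_{k,p-1}$ -- is routine and identical to the main proof.
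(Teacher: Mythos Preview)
Your proposal is correct and follows the paper's approach: the paper omits this proof, stating it is identical to that of Theorem~\ref{t:main.fell.exponential} (and by reduction to Theorem~\ref{t:main.fell} and Corollary~\ref{cor:noPRM}), with the Poisson part $\Nkp$ collapsing to $\emptyset$ because the first condition forces the expected count of $p$-vertex cycles to vanish. Your observation that both limiting pieces are now deterministic (simplifying Part~III), and your identification of the variance estimate in Lemma~\ref{l:expectation.exponential} as the key to the ``hit'' half, are both on the mark; the only adaptation is that $\mathrm{Var}/[\E]^2\to 0$ must be extracted from $M/R\to 0$ and $a(R)/M\to c$ directly rather than via the balance condition, which is routine.
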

\begin{corollary}
Under the assumptions of Theorem \ref{t:main.fell.exponential}, we have, as $n\to\infty$, 
$$
T\bigl( \NnRk \cap I_{Mt} \bigr) \Rightarrow Z_t \ \ \text{in } \bbr_+, \ \ \ n\to\infty,
$$
where $Z_t$ is given by 
$$
Z_t = \begin{cases}
\ T\big( \Delta_{k,p-1}\cap I_t \big)& \text{if } \Nkp \cap J_t = \emptyset, \\
 \ T\bigl( \Nkp \cap J_t \bigr) & \text{if } \Nkp \cap J_t \neq \emptyset. 
\end{cases}
$$
\end{corollary}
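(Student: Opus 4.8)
The plan is to obtain this corollary from Theorem \ref{t:main.fell.exponential} by the continuous mapping theorem, following verbatim the argument behind Corollary \ref{cor:maximal}. The essential point is that the two theorems produce weak limits of the \emph{same} structural form $\Nkp\cup B_{k,p-1}$: the deterministic part $B_{k,p-1}$ is literally the same non-random set in both statements, and passing from the regularly varying regime to the exponential one alters only the normalization (\eqref{e:asym.equ.exponential} in place of \eqref{e:asym.equ}) and the intensity of the Poisson part $\Nkp$ (\eqref{e:mean.meas.light} in place of \eqref{e:mean.measure.heavy}). Since the functional calculation below uses nothing about $\Nkp$ beyond its being a Poisson random measure supported in the cone $\Delta_{k,p}$ that a.s.\ charges no prescribed Lebesgue-null set, the derivation is insensitive to the tail regime and the proof of Corollary \ref{cor:maximal} carries over.

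Concretely, write $\phi_t(F):=T(F\cap I_t)$ with $T$ as in \eqref{e:vertical.distance}; in the rescaled coordinates of \eqref{eq:Phi}, in which the birth-window $I_{Mt}$ becomes $I_t$, the quantity $T(\NnRk\cap I_{Mt})$ is $\phi_t$ evaluated on the (rescaled) diagram $\NnRk$. First I would identify the image of the limit. Because every persistence pair on $m$ vertices lies in $\Delta_{k,m}$ and the contributions of $m>p$ vanish, both the limit set and (asymptotically) $\NnRk$ are supported in $\Delta_{k,p}$, so the relevant region $\Delta_{k,p}\cap I_t$ is compact. Using $T(A\cup B)=\max\{T(A),T(B)\}$ together with $B_{k,p-1}\cap I_t=\Delta_{k,p-1}\cap I_t$ (valid since $t<b_{k,p-1}$), I get
\[
\phi_t\bigl(\Nkp\cup B_{k,p-1}\bigr)=\max\Bigl\{T\bigl(\Nkp\cap I_t\bigr),\,T\bigl(\Delta_{k,p-1}\cap I_t\bigr)\Bigr\}.
\]
A point of $\Nkp\cap I_t$ pushes this maximum above the deterministic value $T(\Delta_{k,p-1}\cap I_t)$ precisely when it lies in $J_t$; hence the right-hand side equals $T(\Delta_{k,p-1}\cap I_t)$ on $\{\Nkp\cap J_t=\emptyset\}$ and equals $T(\Nkp\cap J_t)$ on $\{\Nkp\cap J_t\neq\emptyset\}$, which is exactly $Z_t$.

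The \emph{main obstacle} is to verify that $\phi_t$ is almost surely continuous at the limit $\Nkp\cup B_{k,p-1}$, since neither the restriction $F\mapsto F\cap I_t$ nor the sup-functional $T$ is continuous on all of $\mathcal F(\Delta)$. Lower semicontinuity of $\phi_t$ in the Fell topology is automatic from the hit condition, and upper semicontinuity, which would fail on the unbounded $\Delta$, is rescued by the confinement to the compact cone $\Delta_{k,p}\cap I_t$, on which the miss condition controls the supremum; this is also what makes $T$ the continuous functional invoked in \eqref{e:vertical.distance}. The remaining discontinuities of the restriction map occur only where the limit touches the vertical boundary $\{x=t\}$ of $I_t$ degenerately: here $B_{k,p-1}$ meets this line transversally because $t<b_{k,p-1}$, while $\Nkp$ a.s.\ places no atom on the null line $\{x=t\}$. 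Finally, $\phi_t$ could be discontinuous at the limit only on the event that a Poisson point of $\Nkp$ lands exactly on the critical level $\{y-x=T(\Delta_{k,p-1}\cap I_t)\}$ or on $\partial J_t$; as these are fixed Lebesgue-null sets and $\Nkp$ is a Poisson random measure, that event has probability zero. The a.s.\ continuity of $\phi_t$ at $\Nkp\cup B_{k,p-1}$ follows, and applying the continuous mapping theorem to $\NnRk\Rightarrow\Nkp\cup B_{k,p-1}$ from Theorem \ref{t:main.fell.exponential} yields $T(\NnRk\cap I_{Mt})\Rightarrow Z_t$.
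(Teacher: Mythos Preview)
Your approach is the same as the paper's: both corollaries (this one and Corollary \ref{cor:maximal}) are obtained by applying the continuous mapping theorem to the Fell-topology weak convergence of Theorems \ref{t:main.fell.exponential} and \ref{t:main.fell}, respectively, and the paper explicitly states that the exponential-tail argument is identical to the regularly varying one. You have in fact supplied substantially more detail than the paper, which merely asserts that $T$ is continuous and that the result is ``nearly obvious''; your discussion of the a.s.\ continuity of $\phi_t$ at the limit (handling the boundary $\{x=t\}$, the critical level set, and the compactness needed for upper semicontinuity) fills gaps the paper leaves implicit.
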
 
\begin{corollary} 
Under the assumptions of Theorem \ref{t:main.fell.exponential}, we take $\NnRkm$ as a point process. Then
$$
\sum_{m=p}^\infty \NnRkm \Rightarrow \Nkp \ \ \text{in } \text{MP}(\Delta). 
$$
\end{corollary}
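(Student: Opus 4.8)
The plan is to derive this statement from the Fell-topology convergence already established for Theorem~\ref{t:main.fell.exponential}, upgraded to the vague topology on $\text{MP}(\Delta)$ by means of Kallenberg's Poisson limit theorem; the argument runs parallel to the one for Corollary~\ref{cor:vague}, with \eqref{e:mean.meas.light} replacing \eqref{e:mean.measure.heavy}. Since on each relatively compact $A\subset\Delta$ the quantity $\sum_{m=p}^\infty \NnRkm(A)$ is almost surely a finite sum (all terms with $m>|\cP_n|$ vanish), and since every $\NnRkm$ is simple, it suffices to verify the two hypotheses of Kallenberg's theorem on a convergence-determining class of relatively compact continuity sets $A$ (those with $\E[\Nkp(\partial A)]=0$): mean-measure convergence $\E\big[\sum_{m=p}^\infty \NnRkm(A)\big]\to \E[\Nkp(A)]$, and void-probability convergence $\P\big(\sum_{m=p}^\infty \NnRkm(A)=0\big)\to e^{-\E[\Nkp(A)]}$.

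For the mean-measure condition, the exponential-tail analogs of Lemmas~\ref{l:expectation} and \ref{l:exp.variance}, established inside the proof of Theorem~\ref{t:main.fell.exponential}, show that the $m=p$ term converges to the finite value $\E[\Nkp(A)]$ prescribed by \eqref{e:mean.meas.light}, while each term with $m>p$ vanishes (the exponential analog of \eqref{e:intuition2}). A dominated-summation argument then yields $\sum_{m=p}^\infty \E[\NnRkm(A)]\to \E[\Nkp(A)]$.

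For the void-probability condition, I would note that $\big\{\sum_{m=p}^\infty \NnRkm(A)=0\big\}$ is exactly the event that the random closed set $\bigcup_{m=p}^\infty \NnRkm$ misses $A$. The exponential-tail analog of \eqref{e:quick.result1}, namely $\bigcup_{m=p}^\infty \NnRkm \Rightarrow \Nkp$ in $\mathcal F(\Delta)$ (itself one of the building blocks of the proof of Theorem~\ref{t:main.fell.exponential}), gives, for compact continuity sets $A$, that $\P\big(\bigcup_{m=p}^\infty \NnRkm \cap A=\emptyset\big)\to \P\big(\Nkp\cap A=\emptyset\big)$, and the right-hand side equals $e^{-\E[\Nkp(A)]}$ because $\Nkp$ is a Poisson random measure. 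This completes the verification.

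The main obstacle is the conceptual gap between the two modes of convergence: Fell convergence of supports records only whether a set is hit or missed, not the number of points it contains, whereas vague convergence of counting measures must also control multiplicities. The resolution rests on the simplicity of each prelimiting process $\NnRkm$ together with the fact that the limiting mean measure \eqref{e:mean.meas.light} is diffuse --- integration over the radial variable $\rho$ and the angular variable $\theta$ smears out any potential atom --- so that $\Nkp$ is almost surely simple; for a simple limit, Kallenberg's theorem certifies that void-probability plus mean-measure convergence suffices. The one remaining technical point to check is that the continuity sets form a convergence-determining class in $\text{MP}(\Delta)$ and that all but countably many compact rectangles are continuity sets for $\Nkp$, both of which follow from the diffuseness of the mean measure.
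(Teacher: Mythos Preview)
Your proposal is correct and takes essentially the same approach as the paper: both apply Kallenberg's theorem, verifying mean-measure convergence via the exponential-tail analog of Lemma~\ref{l:expectation} together with the vanishing of the $m>p$ terms, and void-probability convergence via the ingredients developed in the proof of the main theorem. The only cosmetic difference is that you package the void-probability step by citing the Fell convergence $\bigcup_{m=p}^\infty \NnRkm \Rightarrow \Nkp$ as a black box, whereas the paper (in the regularly varying case, which it declares carries over verbatim) cites the underlying $T_i\to 0$ estimates from \eqref{eq:t123} directly; since the Fell convergence in Part~I was established precisely from those estimates, the two routes are equivalent.
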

\bigskip

\section{Proofs}  \label{sec:proofs}

In this section we provide the proofs for all the statements in this paper.
We split the proofs between the regularly varying and the exponentially decaying tail cases.

\subsection{Some notation} 
The following notation will be used throughout the proofs. For $x\in \bbr^d$, $\by =(y_1,\dots,y_{m}) \in (\bbr^d)^{m}$, and $r>0$, define
$$
x+r\by := (x+ry_1, \cdots, x+ry_{m}) \in (\bbr^d)^m. 
$$
\remove{Let $f:\R^d\to \R$ be a probability density function. 
For $\bx=(x_1,\dots,x_m) \in (\bbr^d)^m$,  define
$$
f(\bx) := f(x_1) \cdots f(x_m),
$$
i.e.~the joint density of $m$ iid variables.
Further, for $x\in \bbr^d$, $\by =(y_1,\dots,y_{m-1}) \in (\bbr^d)^{m-1}$, and $r>0$, define
$$
f(x+r\by) := f(x+ry_1) \cdots f(x+ry_{m-1}). 
$$
The same notation will similarly apply to other functions and objects.}
The proofs will involve calculating certain volumes, which we define next.
Let
$$
\mathcal B_r(\bx) := \bigcup_{i=1}^m B(x_i; r), \ \ \bx = (x_1,\dots,x_m) \in (\bbr^d)^m,
$$
be a union of $m$ closed balls of radius $r$, and let
\begin{equation}\label{eq:Q}
Q_r(\bx) := \int_{\mathcal B_r (\bx)} f(z) dz,
\end{equation}
be the probability measure of the given union of balls.

We denote by $\lambda_m$ the Lebesgue measure on $\bbr^m$.
Finally, the notation $C^*$ will represent a generic positive constant, which does not depend on $n$ and may vary between (or even within) the lines. 

\subsection{Regularly varying tails}  \label{sec:proof.RV}
Our main goal in this section is to prove the results for the regularly varying tail case. We do not present the proof of Corollary \ref{cor:noPRM}, since it is very similar to that for Theorem \ref{t:main.fell}. Further, the proof of Corollary \ref{cor:maximal}  will be skipped, because the statement is nearly obvious. 
We will use the following auxiliary point process.
Recalling the definitions of a counting measure at \eqref{e:meas.mu} and $h_R, g_M$ in \eqref{eq:aux_func}, we define
 
\begin{equation}  \label{e:ppc.tilde}
\begin{split}
\NtnRkm (\cdot) &:=\hspace{-5pt} \sum_{\Y\subset \Pn, \, |  \Y| = m}\hspace{-5pt}h_R(\cY) g_M(\cY) \muk_\Y (M \, \cdot\, )\\
\NtnRk (\cdot) &:= \sum_{m=k+2}^\infty \NtnRkm (\cdot).
\end{split}
\end{equation}
The only difference between $\NnRkm$ and $\NtnRkm$ is that the latter does not require the subsets $\cY$ to form a connected component of $\cC_{M}(\cP_{n,R})$, i.e.~$\Y$ does not need to be isolated from the rest of the complex. Consequently, we have
$
\NtnRkm (\cdot) \geq \NnRkm (\cdot)
$.
As in the case of \eqref{eq:Phi}, we may and will denote by \eqref{e:ppc.tilde} the corresponding random closed sets. Indeed the proof below uses \eqref{eq:Phi} and \eqref{e:ppc.tilde} as random closed sets only, except for the argument for Corollary \ref{cor:vague}. 


%
We start with two lemmas to evaluate certain asymptotic moments. 

\begin{lemma} \label{l:expectation}
Let $A\subset \Delta$ be a measurable set, such that $A \cap B_{k,p} \neq \emptyset$. Under the assumptions of Theorem \ref{t:main.fell}, 
\[
\lim_{n\to\infty}\E \bigl( |\Nnkp \cap A| \bigr) = \lim_{n\to\infty}\E \bigl( |\Ntnkp \cap A| \bigr) = \E \bigl( |\Nkp \cap A| \bigr) \in (0,\infty).
\]
\end{lemma}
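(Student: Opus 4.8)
The plan is to compute both expectations explicitly via Palm theory and then show that the connectivity constraint distinguishing $\Nnkp$ from $\Ntnkp$ contributes negligibly. Since $\Ntnkp$ is simple, $|\Ntnkp\cap A|$ equals the point count of its defining sum, and that sum is a functional of the subset $\Y$ alone. By the multivariate Mecke equation for the Poisson process $\Pn$ (intensity $nf$),
\[
\E\bigl(|\Ntnkp \cap A|\bigr) = \frac{n^p}{p!}\int_{(\bbr^d)^p} h_R(\bx)\, g_M(\bx)\, \muk_{\bx}(MA)\prod_{i=1}^p f(x_i)\, d\bx,
\]
with $\bx=(x_1,\dots,x_p)$. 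The next step is the change of variables $\bx = x_1 + M(0,\by)$, $\by=(y_1,\dots,y_{p-1})$, whose Jacobian is $M^{d(p-1)}$. Translation invariance of persistent homology gives $\muk_{\bx}=\muk_{M(0,\by)}$, its scaling invariance gives $\muk_{M(0,\by)}(MA)=\muk_{(0,\by)}(A)$, and likewise $g_M(\bx)=g_1(0,\by)$. This reduces the expectation to
\[
\E\bigl(|\Ntnkp \cap A|\bigr) = \frac{n^p M^{d(p-1)}}{p!}\int_{(\bbr^d)^{p-1}} g_1(0,\by)\,\muk_{(0,\by)}(A)\, I_R(\by)\, d\by,
\]
where $I_R(\by)=\int_{\bbr^d}\one\{\min_i\|x_i\|\ge R\}\, f(x_1)\prod_{i=1}^{p-1}f(x_1+My_i)\,dx_1$.

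The crux is the asymptotics of $I_R(\by)$. Substituting $x_1=Ru$ turns it into $R^d\int_{\bbr^d}\one\{\min_{0\le j\le p-1}\|u+(M/R)y_j\|\ge 1\}\prod_{j=0}^{p-1}f\bigl(R(u+(M/R)y_j)\bigr)\,du$ (with $y_0:=0$). Since $M/R\to0$ and, by $g_1(0,\by)$, the relevant $\by$ lie in a bounded region, the indicator converges to $\one\{\|u\|\ge1\}$, and the regular variation \eqref{e:RV} together with the uniform convergence theorem gives $f\bigl(R(u+(M/R)y_j)\bigr)/f(R)\to\|u\|^{-\alpha}$; Potter bounds furnish an integrable dominating function, so dominated convergence yields $I_R(\by)/(R^d f(R)^p)\to\int_{\|u\|\ge1}\|u\|^{-\alpha p}\,du = s_{d-1}/(\alpha p-d)$, using $\alpha p>d$. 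Feeding this back and invoking \eqref{e:asym.equ} gives $\E(|\Ntnkp\cap A|)\to \frac{s_{d-1}}{p!(\alpha p-d)}\int g_1(0,\by)\muk_{(0,\by)}(A)\,d\by$, which is exactly $\E(|\Nkp\cap A|)$ of \eqref{e:mean.measure.heavy}. The outer interchange of limit and $d\by$-integral is justified by the boundedness of the support together with a uniform Potter bound on $I_R(\by)/(R^d f(R)^p)$.

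Next I would bound the difference. Since $g_M(\Y,\Pn)=g_M(\Y)\,\one\{\text{no }z\in\Pn\setminus\Y \text{ lies within distance }2M \text{ of }\Y\}$, we have $0\le |\Nnkp\cap A| \le |\Ntnkp\cap A|$, and Mecke gives
\[
\E\bigl(|\Ntnkp\cap A|-|\Nnkp\cap A|\bigr)=\frac{n^p}{p!}\int_{(\bbr^d)^p} h_R(\bx)g_M(\bx)\muk_{\bx}(MA)\Bigl(1-e^{-n Q_{2M}(\bx)}\Bigr)\prod_i f(x_i)\,d\bx,
\]
where $Q_{2M}(\bx)=\int_{\mathcal B_{2M}(\bx)}f$. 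Using $1-e^{-nQ_{2M}(\bx)}\le nQ_{2M}(\bx)\le C^* n M^d f(\|x_1\|)$ (valid since $\|x_i\|\ge R$, $M/R\to0$, and $f$ is regularly varying) and repeating the change of variables inserts one extra factor $f(\|x_1\|)$ into the radial integral; the analogue of the computation above then bounds the difference by $C^* n^{p+1}M^{dp}R^d f(R)^{p+1}=C^*\bigl(n^pM^{d(p-1)}R^d f(R)^p\bigr)\bigl(nM^d f(R)\bigr)$. By \eqref{e:asym.equ} the first factor tends to $1$, while taking $p$-th roots in \eqref{e:asym.equ} gives $nf(R)=(1+o(1))M^{-d(p-1)/p}R^{-d/p}$, whence $nM^d f(R)=(1+o(1))(M/R)^{d/p}\to0$ by $M/R\to0$. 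The difference therefore vanishes, and both $\E(|\Nnkp\cap A|)$ and $\E(|\Ntnkp\cap A|)$ converge to $\E(|\Nkp\cap A|)$.

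Finally, finiteness of the limit holds because $g_1(0,\by)$ forces the connected configuration $(0,\by)$ to have diameter at most $2(p-1)$, confining $\by$ to a bounded set, while $\muk_{(0,\by)}(A)\le\binom{p}{k+1}$; positivity follows from $A\cap B_{k,p}\neq\emptyset$, since by the definition of $B_{k,p}$ as the attainable birth--death region there is a configuration with $g_1=1$ realizing a pair in $A$, and a stability/perturbation argument shows that a set of $\by$ of positive Lebesgue measure contributes. I expect the main obstacle to be the uniform regular-variation control in the asymptotics of $I_R(\by)$ --- simultaneously establishing the pointwise limit and an integrable domination in both $u$ and $\by$ --- rather than the Mecke bookkeeping or the elementary verification that $nM^d f(R)\to0$.
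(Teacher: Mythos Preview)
Your proposal is correct and follows essentially the same route as the paper: Palm/Mecke formula, the change of variables $\bx\mapsto(x_1,x_1+M\by)$, the scaling $x_1\mapsto R u$ (the paper uses polar coordinates $x_1\mapsto(R\rho,\theta)$, which is equivalent), regular variation plus Potter bounds for the density ratios, and dominated convergence, all under the key observation $nM^d f(R)\to 0$ coming from \eqref{e:asym.equ} and $M/R\to 0$. The only organizational difference is that the paper treats $\Nnkp$ directly by carrying the factor $e^{-nQ_{2M}}$ through the integral and showing it tends to $1$ pointwise (bounding $nQ_{2M}\le C^* nM^d f(R)\to 0$), then remarks that $\Ntnkp$ is handled identically; you instead compute the $\Ntnkp$ limit first and control the difference via $1-e^{-nQ_{2M}}\le nQ_{2M}$, which yields the same $O(nM^d f(R))$ bound and is equally valid.
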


\begin{lemma} \label{l:exp.variance}
Let $A\subset \Delta$ be a measurable set, with $A\cap B_{k,p-1}\neq \emptyset$. Under the assumptions of Theorem \ref{t:main.fell}, 
\[
\begin{split}
\E\bigl( | \Nnkpone \cap A| \bigr) &\sim C_1 \bigl( nM^df(\Rpn) \bigr)^{-1}, \ \ \ n\to\infty, \text{ and } \\
\text{Var} \bigl(| \Nnkpone \cap A| \bigr) &\leq C_2 \bigl( nM^df(\Rpn ) \bigr)^{-1},
\end{split}
\]
for some $C_1, C_2>0$, which are independent of $n$. 

\end{lemma}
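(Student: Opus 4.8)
The plan is to treat $X_n := |\Nnkpone \cap A|$ as a sum over $(p-1)$-subsets of the Poisson process and compute its first two moments by Palm theory. Writing
\[
X_n = \sum_{\Y \subset \cP_n,\, |\Y| = p-1} h_R(\Y)\, g_M(\Y, \cP_n)\, \muk_\Y(MA),
\]
the multivariate Mecke (Palm) formula for the Poisson process $\cP_n$ of intensity $nf$ gives
\[
\E(X_n) = \frac{n^{p-1}}{(p-1)!} \int_{(\bbr^d)^{p-1}} h_R(\bx)\, g_M(\bx)\, \muk_{\bx}(MA)\, e^{-n\pM(\bx)}\, \prod_{i=1}^{p-1} f(x_i)\, d\bx,
\]
where I use that, after adding $\bx$ to $\cP_n$, the event that $\cC_M(\bx)$ is an \emph{isolated} connected component has probability $g_M(\bx)\, \P(\cP_n \cap \BM(\bx) = \emptyset) = g_M(\bx)\, e^{-n\pM(\bx)}$ by the void probability of the Poisson process (a point of $\cP_n$ attaches to $\bx$ in $\cC_M$ exactly when it lies in $\BM(\bx)$).

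First I would evaluate this integral. Introducing the reference point $x_1 = u$ and relative coordinates $x_i = u + M y_i$ for $2 \le i \le p-1$ (Jacobian $M^{d(p-2)}$), the scaling invariance of persistent homology and of connectivity turns $g_M(\bx)$ into $g_1(0,\by)$ and $\muk_{\bx}(MA)$ into $\muk_{(0,\by)}(A)$, with $\by = (0,y_2,\dots,y_{p-1})$. Since $g_1(0,\by)$ and $\muk_{(0,\by)}(A)$ are supported on configurations of bounded diameter, the perturbations $My_i$ are $O(M) = o(R)$; spherical symmetry and the regular variation \eqref{e:RV} (with Potter-type bounds for uniform control) then give $\prod_i f(x_i) \sim f(\|u\|)^{p-1}$ and $n\pM(\bx) \asymp nM^d f(\|u\|) \to 0$, so $e^{-n\pM(\bx)} \to 1$; here I use $nM^d f(R) \to 0$, which follows from \eqref{e:asym.equ} because $(nM^d f(R))^p \sim (M/R)^d \to 0$. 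Integrating the radial variable $\|u\| \ge R$ by Karamata's theorem (legitimate since $\alpha(p-1) > d$) yields $\int_{\|u\| \ge R} f(\|u\|)^{p-1}\, du \sim s_{d-1} R^d f(R)^{p-1} / (\alpha(p-1) - d)$, and collecting factors gives
\[
\E(X_n) \sim \frac{s_{d-1}}{(p-1)!\,(\alpha(p-1)-d)} \Big( \int_{(\bbr^d)^{p-2}} g_1(0,\by)\, \muk_{(0,\by)}(A)\, d\by \Big) \big( nM^d f(R) \big)^{-1},
\]
after noting $n^{p-1}M^{d(p-2)}R^d f(R)^{p-1} \sim (nM^d f(R))^{-1}$ from \eqref{e:asym.equ}; dominated convergence is justified by the compact support of the $\by$-integrand together with the tail bounds, and $A \cap B_{k,p-1} \neq \emptyset$ makes the $\by$-integral strictly positive, so $C_1 \in (0,\infty)$.

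For the variance I would expand $\E(X_n^2)$ by the two-fold Mecke formula, grouping ordered pairs $(\Y,\Y')$ by $j := |\Y \cap \Y'| \in \{0,\dots,p-1\}$. The decisive observation is that the connected-component constraint in $g_M(\cdot,\cP_n)$ eliminates every intermediate overlap: two distinct connected components of $\cC_M(\cP_n)$ are either equal or disjoint, so $g_M(\Y,\cP_n)\, g_M(\Y',\cP_n) = 0$ whenever $1 \le j \le p-2$. Only $j=p-1$ (diagonal) and $j=0$ (disjoint) survive. The diagonal term is controlled via $\muk_\Y(MA)^2 \le \binom{p-1}{k+1}\muk_\Y(MA)$, hence is at most $\binom{p-1}{k+1}\E(X_n) = O((nM^d f(R))^{-1})$, which sets the stated order. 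The disjoint term $T_0$ differs from $(\E X_n)^2$ only through the replacement of $e^{-n\pM(\bx)}e^{-n\pM(\bx')}$ by $\one\{\min_{i,j}\|x_i - x_j'\| \ge 2M\}\, e^{-n\pM(\bx \cup \bx')}$; this discrepancy vanishes once all cross-distances exceed $4M$, so $|T_0 - (\E X_n)^2|$ is supported on configurations with $\bx'$ within $O(M)$ of $\bx$. Re-running the change of variables on these $2(p-1)$ points (one reference, $2p-3$ relative coordinates, Jacobian $M^{d(2p-3)}$) and applying Karamata (now with $\alpha(2p-2) > d$) shows this contribution is of order $n^{2(p-1)}M^{d(2p-3)}R^d f(R)^{2(p-1)}$, i.e. exactly $(nM^d f(R))^{p-1}$ times the order of $\E(X_n)$, hence $o((nM^d f(R))^{-1})$ since $nM^d f(R) \to 0$ and $p-1 \ge 1$. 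Combining, $\text{Var}(X_n) \le C_2 (nM^d f(R))^{-1}$.

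I expect the main obstacle to be the analytic bookkeeping in the two limit passages: justifying the exchange of limit and integral under regular variation (uniform Potter-type tail bounds together with the compact support of $g_1(0,\cdot)$ and $\muk_{(0,\cdot)}(A)$), and, in the variance, the precise estimate showing the close-configuration discrepancy $T_0 - (\E X_n)^2$ is of strictly smaller order than the diagonal. The combinatorial heart of the argument—that isolation kills every overlap except $j \in \{0,p-1\}$—is clean; the real work lies in the EVT-style asymptotics of the resulting $(p-1)$- and $(2p-2)$-fold integrals.
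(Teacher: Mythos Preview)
Your proposal is correct and follows essentially the same route as the paper: Palm/Mecke for the first moment with the same change of variables and regular-variation asymptotics, the key observation that the connected-component constraint forces $g_M(\Y,\cP_n)g_M(\Y',\cP_n)=0$ for every intermediate overlap $1\le j\le p-2$, and the close-configuration bound on the $j=0$ discrepancy yielding order $(nM^d f(R))^{p-2}\to 0$. The only cosmetic differences are that the paper handles the radial integral by the explicit substitution $r=R\rho$ rather than invoking Karamata, and on the set $\{\BM(\Y_1)\cap\BM(\Y_2)\neq\emptyset\}$ bounds the product $g_M(\Y_1)g_M(\Y_2)\one\{\BM(\Y_1)\cap\BM(\Y_2)\neq\emptyset\}$ by $g_{2M}(\Y_{12})$ before estimating.
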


\begin{proof}[Proof of Lemma \ref{l:expectation}]
We will prove the limit for $\Nnkp$ only, since the limit for $ \Ntnkp$ can be proved in the same way.
It follows from the Palm theory for Poisson processes (e.g., Section 1.7 in \cite{penrose:2003}) that 
\begin{equation}  \label{e:palm.theory.first.lemma}
\E \bigl(| \Nnkp \cap A| \bigr)  = \frac{n^p}{p!}\, \E \Bigl[ \, h_R(\X_p) g_M(\X_p,\X_p \cup \cP_{n})\muk_{\X_p} (MA) \Bigr],
\end{equation}
where $\X_p=(X_1,\dots,X_p)$ is a set of $p$ iid points with probability density $f$, and independently of $\Pn$. Note that for the set $\X_p$ to be disconnected from the rest of the complex $\cC_M(\cP_{n,R})$, we require that $\cP_n\cap\BM(\X_p)=\emptyset$.
Therefore, by the conditioning on $\X_p$ we have
\begin{align*}
\E \bigl(| \Nnkp \cap A| \bigr)  &= \frac{n^p}{p!}\, \E \Bigl[h_R(\X_p) g_M(\X_p) 
  \muk_{\X_p} (MA) \P \bigl( \Pn \cap \BM (\X_p) = \emptyset \bigl| \X_p \bigr) \Bigr] \\
&= \frac{n^p}{p!}\, \E \Bigl[  h_R(\X_p) g_M(\X_p) 
  \muk_{\X_p} (MA) e^{-n\pM (\X_p)} \Bigr] \\
&= \frac{n^p}{p!}\, \int_{(\bbr^d)^p} h_R(\bx) g_M(\bx) \mu_{\bx}^{(k)}(MA)e^{-n\pM (\bx)} \prod_{i=1}^p f(x_i) d\bx.
\end{align*}
Performing the change of variables $x_1\leftrightarrow x$, $x_i \leftrightarrow x+M y_{i-1}$, $i=2,\dots,p$, we have
\begin{align*}
\E \bigl( |\Nnkp \cap A| \bigr)  &= \frac{n^p}{p!}\, M^{d(p-1)} \int_{\bbr^d}dx \int_{(\bbr^d)^{p-1}} d\by h_R(x,x+M\by) g_M(x, x+M\by)   \mu_{(x,x+M\by)}^{(k)}(MA)\\
&\qquad \qquad \qquad \qquad \qquad \qquad  \times e^{-n \pM(x,x+M\by)}f(x) \prod_{i=1}^{p-1} f(x+My_i) \\
&= \frac{n^p}{p!}\, M^{d(p-1)} \int_{\bbr^d}dx \int_{(\bbr^d)^{p-1}} d\by  h_R(x,x+M\by) g_1(0,\by) \mu_{(0,\by)}^{(k)}(A)\\
&\qquad \qquad \qquad \qquad \qquad \qquad  \times e^{-n \pM(x,x+M\by)}f(x) \prod_{i=1}^{p-1} f(x+My_i),
\end{align*}
where the second equality follows from the translation invariance and scaling properties of $g_M$ and $\mu^{(k)}$. 
Next, we apply a polar coordinate transform $x\leftrightarrow (r,\ta)$ where $r\in [0,\infty)$ and $\ta\in S^{d-1}$, which is followed by another change of variable $r\leftrightarrow \Rpn \rho$.
Notice also that 
$$
h_R(R\rho\ta ,R\rho\ta+M\by) = \one \{ \rho \ge 1 \}\, h_1 (\rho\ta + M\by/R). 
$$
Combining all of these together we obtain
\begin{equation}\label{eq:exp1}
\begin{split}
\E \bigl( |\Nnkp \cap A| \bigr)  &= \frac{n^p}{p!}\, M^{d(p-1)} \Rpn^d (f(\Rpn))^p \int_1^\infty \rho^{d-1}  d\rho \int_{S^{d-1}} \hspace{-10pt}J(\ta)d\ta \int_{(\bbr^d)^{p-1}} \hspace{-5pt} d\by  \\
  & \times h_1(\rho\theta + M\by/R) g_1(0,\by)\mu_{(0,\by)}^{(k)}(A) \\
&  \times e^{-n \pM(R\rho\theta, R\rho\theta+M\by)} \frac{f(R\rho)}{f(R)}\prod_{i=1}^{p-1} \frac{f\big( R\| \rho \ta + My_i/R \| \big)}{f(R)}\\
\end{split}
\end{equation}
where $J(\ta)$ is the Jacobian given by \eqref{e:Jacobian}. 

Our next goal is to find the limit of the individual terms inside the integral.
First, notice that since $M/R\to 0$ we have that $h_1(\rho\theta+M\by/R)\to1$ for all $\rho \ge 1$, $\ta \in S^{d-1}$, and $\by = (y_1,\dots,y_{p-1}) \in (\bbr^d)^{p-1}$.
Next, appealing to the regular variation of $f$ in \eqref{e:RV}, we have
$$
\frac{f(R\rho)}{f(R)}\prod_{i=1}^{p-1} \frac{f\big( R\| \rho \ta + My_i/R \| \big)}{f(R)} \to \rho^{-\alpha p},
$$
for all $\rho\geq1$, $\ta\in S^{d-1}$, and $\by\in (\bbr^d)^{p-1}$.

Finally, we verify that the exponential term in \eqref{eq:exp1} converges to one.
To evaluate $Q_{2M}$ we apply the change of variable $z\leftrightarrow\Rpn \rho \ta + M v$ in \eqref{eq:Q}. This yields
\begin{align*}
&n\pM(\Rpn \rho \ta, \Rpn \rho \ta +M \by) \\
&\quad =nM^d f(\Rpn) \int_{\mathcal B_2(0,\by)} \frac{f\bigl( \Rpn \| \rho \ta +M v/\Rpn \| \bigr)}{f(\Rpn)}\, dv \\
&\quad \leq nM^d f(\Rpn) \sup_{v\in\mathcal B_2(0,\by)} \frac{f\bigl( \Rpn \| \rho \ta +M v/\Rpn \| \bigr)}{f(\Rpn)}\, \lambda_d\bigl( \mathcal B_2(0,\by) \bigr). 
\end{align*} 
Observe that for all $\rho \geq 1$, $\ta\in S^{d-1}$, and $v\in \mathcal B_2(0,\by)$ such that $\cechone (0,\by)$ is connected, we have, for large enough $n$,
$$
\| \rho \ta +M v/\Rpn \| \geq \frac{\rho}{2} \geq \frac{1}{2}. 
$$
Therefore, the Potter bound for regularly varying functions (e.g., Theorem 1.5.6 in \cite{bingham:goldie:teugels:1987} or Proposition 2.6 in \cite{resnick:2007}) gives, for every $0<\zeta <\alpha-d$,
\begin{align*}
&\sup_{v\in\mathcal B_2(0,\by)} \frac{f\bigl( \Rpn \| \rho \ta +M v/\Rpn \| \bigr)}{f(\Rpn )} \\
&\quad \leq C^* \sup_{v\in \mathcal B_2(0,\by)}\max \Bigl\{ \| \rho \ta +M v/\Rpn \|^{-\alpha +\zeta}, \, \| \rho \ta +M v/\Rpn \|^{-\alpha-\zeta}  \Bigr\} \\
&\quad \leq C^*. 
\end{align*}
Thus,  for all $\rho,\ta,\by$ we have
\[
n \pM(R\rho\theta, R\rho\theta+M\by) \le C^* nM^d f(R).
\]

Recalling \eqref{e:asym.equ}, together with the assumption $M/R\to 0$, ensures that $nM_n^df(\Rpn ) \to 0$, from which we can conclude that
\[
e^{-n \pM(R\rho\theta, R\rho\theta+M\by)} \to 1.
\]

Assuming that the dominated convergence theorem applies (as justified next), while using \eqref{e:asym.equ}, we can conclude that 
\[
\begin{split}
\E \bigl( |\Nnkp \cap A| \bigr) &\to \frac{1}{p!}\, \int_1^\infty \rho^{d-1-\alpha p}d\rho \int_{S^{d-1}} \hspace{-10pt}J(\ta)d\ta \int_{(\bbr^d)^{p-1}} \hspace{-5pt}  
g_1(0,\by)  \muk_{(0,\by)}(A) d\by\\
&= \frac{s_{d-1}}{p!(\alpha p -d)} \int_{(\R^d)^{p-1}} g_1(0,\by) \mu_{(0,\by)}^{(k)}(A) d\by\\
&= \E \bigl( \Nkp (A) \bigr), \ \ \ n\to\infty, 
\end{split}
\]
as required. 

It now remains to establish an integrable upper bound for an integrand in \eqref{eq:exp1}, in order to apply the dominated convergence theorem. First, the exponential term in \eqref{eq:exp1}
 is obviously bounded by one. As for the ratio of the densities, applying the Potter bound repeatedly we derive that, for every $0 < \zeta < \alpha -d$, there exists a $C>0$ (we have introduced a specific constant $C$, not a generic one, for later use) such that, for sufficiently large $n$, 
\begin{align} 
\frac{f(\Rpn \rho )}{f(\Rpn )} \one \{ \rho \geq 1 \} &\leq C \max \{ \rho^{-\alpha + \zeta},\, \rho^{-\alpha -\zeta} \}\, \one \{  \rho \geq 1\} \label{e:Potter1} \\
&= C \rho^{-\alpha +\zeta}\, \one \{  \rho \geq 1\} \notag,
\end{align}
and for each $i=1,\dots,p-1$,
\begin{align}  
& \frac{f\bigl( \Rpn \| \rho \ta +M y_i/\Rpn \| \bigr)}{f(\Rpn )}\, \one \bigl\{  \| \rho \ta +M y_i/\Rpn \| \geq 1 \bigr\} \label{e:Potter2}\\
&\leq C \max \Bigl\{  \| \rho \ta +M y_i/\Rpn \|^{-\alpha +\zeta}, \, \| \rho \ta +M y_i/\Rpn \|^{-\alpha -\zeta} \Bigr\} \notag \\
&\qquad \qquad \qquad \qquad \qquad \times \one \bigl\{  \| \rho \ta +M y_i/\Rpn \| \geq 1 \bigr\} \notag \\
&\leq C. \notag
\end{align}
Since
$$
\int_1^\infty \rho^{d-1-\alpha+\zeta} d\rho \int_{(\bbr^d)^{p-1}} g_1(0,\by)  \muk_{(0,\by)}(A)d\by <\infty, 
$$
we now obtain the required integrable bound. 
\end{proof}
\vspace{10pt}

\begin{proof}[Proof of Lemma \ref{l:exp.variance}]
Repeating the arguments in the proof of Lemma \ref{l:expectation}, we can write
\begin{align*}
\E \bigl(| \Nnkpone \cap A| \bigr)  &=  \frac{n^{p-1}}{(p-1)!}\, M^{d(p-2)} \Rpn^d (f(\Rpn))^{p-1} \int_1^\infty \rho^{d-1} d\rho \int_{S^{d-1}} \hspace{-10pt}J(\ta)d\ta \int_{(\bbr^d)^{p-2}} \hspace{-5pt} d\by  \\
&\quad \times h_1(\rho\theta + M\by /R) g_1(0,\by) 
  \muk_{(0,\by)}(A) \\
&\quad \times e^{-n \pM (\Rpn \rho \ta, \Rpn \rho \ta +M \by)} 
  \frac{f(\Rpn \rho )}{f(R)} \prod_{i=1}^{p-2} \frac{f\big( R \| \rho \ta + My_i /R \| \big)}{f(R)}. 
\end{align*}
From here, proceeding the same way as in the previous proof, we can conclude that the triple integral above converges to a positive constant. We also use \eqref{e:asym.equ} to get that, for some $C_1>0$,  
\begin{equation}  \label{e:asym.p-1}
\E \bigl(| \Nnkpone \cap A| \bigr) \sim C_1(n M^d f(R))^{-1}, \ \ \ n\to\infty. 
\end{equation}  

For the result on variance, we begin with writing 
\begin{align*}
\E \Bigl[ |\Nnkpone \cap A|^2 \Bigr] &= \sum_{\ell=0}^{p-1} \E \biggl[  \sum_{\substack{\Y \subset \Pn  \\  |\Y|  = |\Y'|= p-1}} \sum_{\substack{\Yp \subset \Pn  \\  | \Y \cap \Yp | =\ell}} h_R(\Y\cup\Yp)g_M(\Y,\Pn) g_M(\Yp, \Pn) \muk_\Y (A)\muk_\Yp (A) \biggl]\\
&=: \sum_{\ell=0}^{p-1} I_\ell. 
\end{align*}
For $\ell=p-1$, we know from \eqref{e:asym.p-1} that $I_{p-1} \sim C_1 (n M^d f(R))^{-1}$ as $n\to\infty$. 
For every $\ell \in \{1,\dots,p-2  \}$,  the condition $| \Y \cap \Yp | = \ell$ requires $g_M(\Y,\Pn)g_M(\Yp,\Pn) = 0$, as it is impossible for $\Y$ and $\Y'$ to be a connected component simultaneously.
Therefore, we have
\begin{align*}
\text{Var} \bigl( |\Nnkpone \cap A| \bigr) &= \sum_{\ell=0}^{p-1}I_\ell - \Bigl[ \E \bigl( |\Nnkpone \cap A| \bigr) \Bigr]^2 \\
&\sim C^* \bigl( nM^d f(\Rpn ) \bigr)^{-1} + I_0 - \Bigl[ \E \bigl(| \Nnkpone \cap A| \bigr) \Bigr]^2.
\end{align*}
To finish the proof we thus need to show that
$$
I_0 - \Bigl[ \E \bigl(| \Nnkpone \cap A| \bigr) \Bigr]^2 \to 0, \ \ \ n\to\infty. 
$$
Applying Palm theory yields  
\begin{align*}
I_0 &= \frac{n^{2(p-1)}}{\bigl( (p-1)! \bigr)^2}\, \E \Bigl[ h_R(\Y_{12}) g_M(\Y_1,\Y_{12}\cup \Pn)g_M(\Y_2,\Y_{12}\cup \Pn)\muk_{\Y_1}(A)\muk_{\Y_2}(A)  \Bigr],
\end{align*}
where $\Y_1$ and $\Y_2$ are disjoint sets of $(p-1)$ iid points respectively, and $\Y_{12} := \Y_1 \cup \Y_2$ is independent of $\Pn$. 
Conditioning on $\Y_{12}$, we have
\begin{align*}
I_0 &= \frac{n^{2(p-1)}}{\bigl( (p-1)! \bigr)^2}\, \E \Bigl[ h_R(\Y_{12}) g_M(\Y_1)g_M(\Y_2)\muk_{\Y_1} (A)\muk_{\Y_2} (A)\\
&\qquad \qquad \qquad \quad \times \one \bigl\{ \mathcal B_{M} (\Y_1) \cap \mathcal B_{M} (\Y_2) =\emptyset  \bigr\} e^{-n \pM (\Y_{12})} \Bigr]. 
\end{align*}
On the other hand, 
\begin{align*}
\Bigl[ &\E \bigl( |\Nnkpone \cap A| \bigr) \Bigr]^2 \\
&= \frac{n^{2(p-1)}}{\bigl( (p-1)! \bigr)^2}\, \E \Bigl[h_R(\Y_{12}) g_M(\Y_1) g_M(\Y_2)\muk_{\Y_1} (A)\muk_{\Y_2} (A)   e^{-n( \pM (\Y_1) + \pM (\Y_2))} \Bigr]. 
\end{align*}
Combining them together, we have
\begin{align*}
&I_0 - \Bigl[ \E \bigl(| \Nnkpone \cap A| \bigr) \Bigr]^2 \leq   \E (\Xi_n),  \label{e:0th.term} 
\end{align*}
where 
\begin{align*}
\Xi_n &= \frac{n^{2(p-1)}}{\bigl( (p-1)! \bigr)^2}\,h_R(\Y_{12}) g_M(\Y_1)g_M(\Y_2) \\
&\quad \times \muk_{\Y_1} (A)\muk_{\Y_2} (A)\, \Bigl( e^{-n \pM (\Y_{12})} - e^{-n (\pM(\Y_1) + n\pM (\Y_2))} \Bigr). 
\end{align*}
Furthermore $\E (\Xi_n)$ can be split into two parts, 
$$
\E \Bigl[ \, \Xi_n \Bigl( \one \bigl\{ \BM (\Y_1) \cap \BM(\Y_2) =\emptyset \bigr\}  +\one \bigl\{ \BM (\Y_1) \cap \BM(\Y_2) \neq \emptyset \bigr\} \Bigr) \Bigr].
$$
Note that whenever $\BM(\Y_1)\cap \BM(\Y_2) = \emptyset$, we have $\pM(\Y_{12}) = \pM(\Y_1) + \pM(\Y_2)$, in which case  $\Xi_n=0$. So it suffices to consider the other part only. Bounding an exponential term by one, 
\begin{align*}
\E[\Xi_n] &\leq \frac{n^{2(p-1)}}{\bigl( (p-1)! \bigr)^2}\, \E \Bigl[h_R(\Y_{12})g_M(\Y_1)g_M(\Y_2)
 \muk_{\Y_1} (A)\muk_{\Y_2} (A)\, \one \bigl\{ \BM (\Y_1) \cap \BM(\Y_2) \neq \emptyset \bigr\}  \Bigr].
\end{align*}
Notice that
\begin{align*}
g_M(\Y_1)g_M(\Y_2)\one \bigl\{ \BM (\Y_1) \cap \BM(\Y_2) \neq \emptyset \bigr\} 
\leq g_{2M}(\Y_{12}).
\end{align*}
This, together with the fact that $\mu_{\Y_i}^{(k)}(A) \le \binom{p-1}{k+1}$, yields
\begin{equation}  \label{e:final.bound.EDelta}
\E[\Xi_n] \leq \frac{n^{2(p-1)}}{\bigl( (p-1)! \bigr)^2}\, \begin{pmatrix} p-1 \\ k+1 \end{pmatrix}^2 \E \bigl[ h_R(\Y_{12})  g_{2M}(\Y_{12})   \bigr].
\end{equation}
Calculating the expectation portion as in the proof of Lemma \ref{l:expectation} and using \eqref{e:asym.equ}, we find that the right hand side in \eqref{e:final.bound.EDelta} equals
\begin{align*}
 O \bigl( n^{2(p-1)} M^{d(2p-3)} \Rpn^d(f(\Rpn))^{2(p-1)} \bigr) =  O\Bigl( \bigl( nM^df(\Rpn ) \bigr)^{p-2} \Bigr) \to0, \ \ \ n\to\infty. 
\end{align*}
Now, the entire proof has been completed. 
\end{proof}
\vspace{10pt}

\begin{proof}[Proof of Theorem \ref{t:main.fell}]
We divide the proof into three parts.\\
\noindent
{\bf Part I} - Prove the ``random" part of the limit, i.e.
\begin{equation}  \label{e:main.part1}
\bigcup_{m=p}^\infty \Nnkm {\Rightarrow} \Nkp \ \ \text{in } \mathcal F(\Delta). 
\end{equation}

\noindent
{\bf Part II} - Prove the ``nonrandom" part of the limit, i.e.
$$
\bigcup_{m=k+2}^{p-1} \Nnkm \Rightarrow B_{k,p-1} \  \ \text{in } \mathcal F(\Delta). 
$$

\noindent
{\bf Part III} - Combine I and II to conclude the statement in the theorem, 
$$
\NnRk \Rightarrow \Nkp \cup B_{k,p-1} \ \ \text{in } \mathcal F(\Delta). 
$$


\noindent {\bf Part I}: We wish to prove \eqref{e:main.part1}. 
By virtue of Theorem 6.5 in \cite{molchanov:2005}, it is enough to verify that for every compact subset $A \subset \Delta$ with $A\cap B_{k,p}\neq \emptyset$,  we have
$$
\P \Bigl( \bigcup_{m=p}^\infty \Nnkm \cap A \neq \emptyset \Bigr) \to \P \bigl( \Nkp \cap A\neq \emptyset \bigr). 
$$
We can proceed as follows: 
\begin{equation}\label{eq:t123}
\begin{split}
&\Bigl| \, \P \Bigl( \bigcup_{m=p}^\infty \Nnkm \cap A \neq \emptyset \Bigr) -  \P \bigl( \Nkp \cap A\neq \emptyset \bigr) \Bigr| \\
& \leq  \Bigl| \, \P\bigl( \Nnkp \cap A =\emptyset \bigr) - \P \bigl( \Nkp \cap A=\emptyset \bigr) \Bigr| + \P \Bigl( \bigcup_{m=p+1}^\infty \big\{ \Nnkm \cap A \neq \emptyset  \big\}\Bigr) \\
&\leq  \Bigl| \, \P\bigl( \Ntnkp \cap A =\emptyset \bigr) - \P \bigl( \Nkp \cap A =\emptyset \bigr) \Bigr| + \P \bigl( ( \Ntnkp \setminus \Nnkp ) \cap A \neq \emptyset \bigr)\\
&\qquad\qquad\qquad\qquad\qquad\qquad\qquad \qquad\qquad + \P \Bigl( \bigcup_{m=p+1}^\infty \big\{ \Nnkm \cap A \neq \emptyset  \big\} \Bigr) \\
&\leq \Bigl| \, \P\bigl( \Ntnkp \cap A =\emptyset \bigr) - \P \bigl( \Nkp \cap A =\emptyset \bigr) \Bigr|+ \E \Bigl[| \Ntnkp \cap A| - |\Nnkp \cap A| \Bigr] \\
&\qquad\qquad\qquad\qquad\qquad\qquad\qquad \qquad\qquad +  \sum_{m=p+1}^\infty \E\bigl( | \Nnkm \cap A|\bigr),\\
& =: T_1 + T_2 + T_3.
\end{split}
\end{equation}
where the last step follows from Markov's inequality. 
To complete the proof, we thus need to show that $T_i \to 0$ for $i=1,2,3$.
First, $T_2\to 0$ follows as a direct consequence of Lemma \ref{l:expectation}.

Next, we show that $T_1\to 0$. To this end, we introduce an iid random sample version of $\Ntnkp$. More specifically,  let 
$$
\I_n := \begin{cases} 
\bigl\{ (i_1,\dots,i_p) \in \bbn_+^p: 1 \leq i_1 < \cdots < i_p \leq n \bigr\} & \text{if } \ n \geq p, \\
\emptyset & \text{if } \ n <p.
\end{cases}
$$
and 
$$
\eta_{\bi, n} := h_R(\X_\bi) g_M(\X_\bi)  \muk_{\X_\bi} (M A), 
$$
where $\X_\bi = (X_{i_1}, \dots, X_{i_p})$ for $\bi=(i_1,\dots, i_p) \in \I_n$. Notice that the $X_i$'s here are the same as those in \eqref{eq:pois} to  generate $\cP_n$.
In other words, $\eta_{\bi,n}$ is the total number of the $k$th persistence pairs lying in $MA$ and generated by the subset $\X_{\bi}$, with the restriction that $\X_{\bi}$ is connected and each point in $\X_{\bi}$ lies outside $B(0; R)$.
We now claim that
$$
\P \Bigl( \sum_{\bi \in \I_n} \eta_{\bi, n} = 0 \Bigr)-\P \bigl( \Ntnkp \cap A = \emptyset\bigr)  \to 0, \ \ \ n\to\infty.
$$
For any integer-valued random variables $Y_1$ and $Y_2$ defined on the same probability space we have,
$$
\bigl|\, \P (Y_1=0) - \P(Y_2=0) \bigr| \leq \E \bigl( | Y_1-Y_2 | \bigr). 
$$
Therefore,
\begin{align}
\Bigl|\,  \P \Bigl( \sum_{\bi \in \I_n} \eta_{\bi, n} = 0 \Bigr) - \P \bigl( \Ntnkp \cap A = \emptyset \bigr) \Bigr| &\leq \E \Bigl[ \bigl|  \sum_{\bi \in \I_n} \eta_{\bi, n} - |\Ntnkp \cap A| \bigr| \Bigr] \label{e:Poisson.iid} \\
&= \sum_{m=0}^\infty \P \bigl( |\Pn| = m \bigr) \E \Bigl[ \, \Bigl|  \sum_{\bi \in \I_n} \eta_{\bi, n} - \sum_{\bi \in \I_m} \eta_{\bi, n} \Bigr|  \, \Bigr]  \notag \\
&= \sum_{m=0}^\infty \P \bigl( |\Pn| = m \bigr) \Bigl| \begin{pmatrix} n\\p \end{pmatrix} - \begin{pmatrix} m\\p \end{pmatrix} \Bigr| \, \E(\eta_{\bi,n}). \notag 
\end{align}
Returning to \eqref{e:palm.theory.first.lemma}, we find that the expectation portion of the right hand side in \eqref{e:palm.theory.first.lemma} is asymptotically equal to $\E(\eta_{\bi, n})$. Additionally, Lemma \ref{l:expectation} ensures that $\E \big(| \Nnkp \cap A| \big)$ tends to a positive constant as $n\to\infty$. Hence the rightmost term at \eqref{e:Poisson.iid} can be bounded by 
\begin{equation}  \label{e:binom.bound}
C^*  \sum_{m=0}^\infty \P \bigl( |\Pn| = m \bigr)  n^{-p} \Bigl| \begin{pmatrix} n\\p \end{pmatrix} - \begin{pmatrix} m\\p \end{pmatrix} \Bigr|,
\end{equation}
which itself is further bounded by 
\begin{align*}
C^* \bigg\{ n^{-p} \Big| \binom{n}{p} -\frac{n^p}{p!} \Big| + \frac{1}{p!}\, \E \bigg[ \Big| \Big( \frac{|\Pn|}{n}  \Big)^p -1 \, \Big|  \bigg]  +n^{-p} \sum_{m=0}^\infty \P(|\Pn| = m) \Big| \binom{m}{p} -\frac{m^p}{p!} \Big| \bigg\}. 
\end{align*}
It is now straightforward to show that each of the three terms converges to $0$ as $n\to\infty$. 

To prove $T_1\to 0$, it now suffices to show that 
\begin{equation}  \label{e:dePoisson}
\P \Bigl( \sum_{\bi \in \I_n} \eta_{\bi, n} = 0 \Bigr) - \P \bigl( \Nkp \cap A = \emptyset \bigr) \to 0, \ \ \ n\to\infty. 
\end{equation}
To this end, our argument relies on the so-called \textit{total variation distance}, which is defined for two random variables $Y_1, Y_2$ as 
$$
d_{\text{TV}} (Y_1,Y_2) := \sup_{A\subset \bbr} \bigl|\, \P(Y_1\in A) - \P(Y_2\in A) \bigr|. 
$$
Denoting $Z\sim \text{Poisson}\Bigl(  \E \bigl( \sum_{\bi \in \I_n} \eta_{\bi,n}\bigr) \Bigr)$, and using the triangle inequality, we have 
\begin{align*}
\Bigl|\,  &\P \Bigl( \sum_{\bi \in \I_n} \eta_{\bi, n} = 0 \Bigr) - \P \bigl( \Nkp \cap A = \emptyset \bigr) \Bigr| \\
&\leq d_{\text{TV}} \biggl( \sum_{\bi \in \I_n}\eta_{\bi, n}, \, Z \biggr)  + \biggl|\, \P (  Z =0 ) - \P\bigl( \Nkp \cap A =\emptyset \bigr)  \biggr| 
\end{align*}
Since $Z$ and $\Nkp(A)$ are both Poisson, an elementary calculation shows that 
\begin{align*}
\biggl|\, \P (  Z =0 ) - \P\bigl( \Nkp \cap A=\emptyset \bigr) \biggr| &\leq \Bigl| \,  \E \Bigl( \sum_{\bi \in \I_n} \eta_{\bi, n} \Bigr) - \E \bigl(| \Nkp \cap A| \bigr)  \Bigr| \\
&\leq \Bigl| \,  \E \bigl(| \Ntnkp \cap A| \bigr) - \E \bigl(| \Nkp \cap A| \bigr) \Bigr| + o(1) \to 0,
\end{align*}
where we have used \eqref{e:Poisson.iid} and Lemma \ref{l:expectation}. 

In order to bound $d_{\text{TV}} ( \sum_{\bi \in \I_n}\eta_{\bi, n}, \, Z )$ we will use  \textit{Stein's Poisson approximation theorem} (e.g., Theorem 2.1 in \cite{penrose:2003}). As preparation, however, we need to define a certain graph on $\I_n$ as follows. For $\bi, \bj \in \I_n$, write $\bi \sim \bj$ if and only if they have at least one common element, i.e., $| \bi \cap \bj | > 0$. Then, $(\I_n, \sim)$ constitutes a \textit{dependency graph}, that is, for every $I_1, I_2 \subset \I_n$ with no edges connecting $I_1$ and $I_2$, we have that $(\eta_{\bi, n}, \, \bi \in I_1)$ and $(\eta_{\bi, n}, \, \bi \in I_2)$ are independent. 
Under this setup, Stein's Poisson approximation theorem yields 
\[d_{\text{TV}} \biggl( \sum_{\bi \in \I_n}\eta_{\bi, n}, Z \biggr) \leq 3 \biggl[  \, \sum_{\bi \in \I_n} \sum_{\bj \in N_\bi} \E (\eta_{\bi,n}) \E(\eta_{\bj, n}) + \sum_{\bi \in \I_n} \sum_{\bj \in N_\bi\setminus \{  \bi\}} \E(\eta_{\bi,n}\eta_{\bj,n})  \, \biggr], 
\]
where $N_{\bi} = \{ \bj \in \I_n: \bi \sim \bj \} \cup \{ \bi \}$. \\
From the argument before \eqref{e:binom.bound}, we know that for sufficiently large $n$,
$$
\E (\eta_{\bi,n}) \leq 2 \matnp^{-1} \E \bigl(| \Nkp \cap A| \bigr).
$$
Therefore,
\begin{align*}
&\sum_{\bi \in \I_n} \sum_{\bj \in N_\bi} \E (\eta_{\bi,n}) \E(\eta_{\bj, n}) \\
&\leq \matnp \biggl( \matnp - \begin{pmatrix} n-p \\ p \end{pmatrix} \biggr) 4 \matnp^{-2} \Bigl( \E \bigl(| \Nkp\cap A| \bigr) \Bigr)^2  \to 0 \ \ \text{as } n\to\infty. 
\end{align*}
For $\bi, \bj\in \I_n$ with $\ell := | \bi \cap \bj | \in \{ 1,\dots,p-1 \}$, by the same change of variables as in the proof of Lemma \ref{l:expectation}, we have 
\begin{align*}
\E \bigl(\eta_{\bi, n}\eta_{\bj,n} \bigr)  &=  M^{d(2p-\ell-1)} \Rpn^d (f(\Rpn))^{2p-\ell} \int_1^\infty  \rho^{d-1} d\rho \int_{S^{d-1}} \hspace{-10pt}J(\ta)d\ta \int_{(\bbr^d)^{2p-\ell-1}} \hspace{-5pt} d\by  \\
& \times h_1(\rho\theta+M\by/\Rpn)g_1(0,\by)   \muk_{(0,\by)}(A) \\
&\times e^{-n \pM (\Rpn \rho \ta, \Rpn \rho \ta +M \by)} \frac{f(R\rho)}{f(R)} \prod_{i=1}^{2p-\ell-1} \frac{f\big(  R \| \rho \ta + M y_i /R \|\big)}{f(R)}\\
&= O \bigl( M^{d(2p-\ell-1)} \Rpn^d (f(\Rpn ))^{2p-\ell} \bigr). 
\end{align*}
It follows from \eqref{e:asym.equ} that 
\begin{align*}
\sum_{\bi \in \I_n}\sum_{\bj \in N_\bi \setminus \{ \bi \}} \E (\eta_{\bi, n}\eta_{\bi,n}) 
&= \sum_{\ell=1}^{p-1} \matnp \begin{pmatrix} p\\ \ell \end{pmatrix} \begin{pmatrix} n-p\\ p-\ell \end{pmatrix} \E(\eta_{\bi,n}\eta_{\bj,n})\, \one \bigl\{ |\bi \cap \bj| =\ell \bigr\} \\
&\leq C^*\sum_{\ell=1}^{p-1} n^{2p-\ell} M_n^{d(2p-\ell-1)} \Rpn^d (f(\Rpn))^{2p-\ell} \\
&\leq C^* nM_n^d f(\Rpn) \to 0, \ \ \ n\to\infty,
\end{align*}
and hence, \eqref{e:dePoisson} is established. 

Finally we turn our attention to showing that $T_3\to 0$ in \eqref{eq:t123}. For every $m \geq p+1$, repeating the argument of Lemma \ref{l:expectation}, 
\begin{align*}
\E \bigl( |\Nnkm \cap A| \bigr)  &=  \frac{n^m}{m!}\, M^{d(m-1)} \Rpn^d (f(\Rpn))^{m} \int_1^\infty \rho^{d-1} d\rho \int_{S^{d-1}} \hspace{-10pt}J(\ta)d\ta \int_{(\bbr^d)^{m-1}} \hspace{-5pt} d\by  \\
&\times h_1( \rho\ta + M \by /\Rpn )g_1(0,\by)   \muk_{(0,\by)}(A) \\
&\times e^{-n \pM (\Rpn \rho \ta, \Rpn \rho \ta +M \by)} \frac{f(R\rho)}{f(R)} \prod_{i=1}^{m-1} \frac{f\big( R \| \rho \ta + My_i /R \|  \big)}{f(R)}. 
\end{align*}
Using \eqref{e:Potter1} and \eqref{e:Potter2}, together with the fact that for large enough $n$
$$
n^m M^{d(m-1)} \Rpn^d (f(\Rpn ))^{m} \leq 2\bigl( nM^df(\Rpn ) \bigr)^{m-p},
$$
we have
\begin{align*}
\E \bigl(| \Nnkm \cap A| \bigr) &\leq \frac{2C^m}{m!}\, \bigl( nM^df(\Rpn ) \bigr)^{m-p}\\
& \times \int_1^\infty \rho^{d-1-\alpha +\zeta} d\rho \int_{S^{d-1}} \hspace{-10pt}J(\ta)d\ta \int_{(\bbr^d)^{m-1}} \hspace{-10pt} d\by   g_1(0,\by) \muk_{(0,\by)}(A) \\
&\leq \frac{2s_{d-1}}{\alpha - d-\zeta}\, \frac{C^m}{m!}\, \begin{pmatrix} m \\ k+1 \end{pmatrix} \bigl( nM^df(\Rpn ) \bigr)^{m-p}  \int_{(\bbr^d)^{m-1}} g_1(0,\by) d\by, 
\end{align*}
where at the second step we have applied $\mu_{(0,\by)}^{(k)}(A) \le \binom{m}{k+1}$. 
Next, the well-known fact that there exist $m^{m-2}$ spanning trees on a set of $m$ vertices, yields 
$$
\int_{(\bbr^d)^{m-1}} g_1 (0,\by)  d\by \leq m^{m-2} \omega_d^{m-1},
$$
where $\omega_d$ represents the volume of a unit ball in $\bbr^d$. 
To show that $T_3\to 0$, it  therefore remains to verify that
$$
\sum_{m=p+1}^\infty \frac{C^m}{m!}\, \begin{pmatrix}
m \\k+1
\end{pmatrix} \bigl( nM^df(\Rpn ) \bigr)^{m-p} m^{m-2} \omega_d^{m-1} \to 0, \ \ \ n\to\infty.
$$
From Stirling's formula (i.e., $m!\geq (m/e)^m$ for large $m$ enough), we can bound the left hand side above by a constant multiple of
$$
\sum_{m=p+1}^\infty m^{k-1} \bigl( e C \omega_d nM^df(\Rpn ) \bigr)^{m-p}, 
$$
which clearly vanishes as $n\to\infty$. We thus proved that $T_i\to 0$ for $i=1,2,3$ in \eqref{eq:t123}, so we can conclude Part I. 
\vspace{10pt}


\noindent {\bf{Part II}:} \\
Recall that our goal here is to prove  the nonrandom part of the limit, i.e.
\begin{equation}\label{e:goal.part2}
\Nhnkp := \bigcup_{m=k+2}^{p-1} \Nnkm \Rightarrow B_{k,p-1} \ \ \text{in } \mathcal F(\Delta). 
\end{equation}
First, for a measurable set $A\subset \Delta$ and $\epsilon >0$, denote by $(A)^{\epsilon- }$ 
an open $\epsilon$-envelop in terms of the Euclidean metric (see \eqref{e:open.envelop}). By the definition of convergence in probability under the Fell topology (see Definition 6.19 in \cite{molchanov:2005}), we need to show that 
$$
\P \biggl( \Bigl[ \Bigl(\Nhnkp\setminus (B_{k,p-1})^{\epsilon-} \Bigr) \cup \Bigl( B_{k,p-1} \setminus \Bigl( \Nhnkp \Bigr)^{\epsilon-}   \Bigr) \Bigr] \cap K \neq \emptyset  \biggr) \to 0
$$
for every $\epsilon>0$ and a compact set $K$ in $\Delta$. Since by construction, $\Nhnkp \subset B_{k,p-1}$, we have
$$
\Nhnkp \setminus (B_{k,p-1})^{\epsilon-} = \emptyset \ \ \text{a.s.}
$$
It thus remains to prove that
$$
\P \biggl( \Bigl[ B_{k,p-1}  \setminus \Bigl(  \Nhnkp \Bigr)^{\epsilon-}   \Bigr] \cap K \neq \emptyset  \biggr) \to 0, \ \ \ n\to\infty. 
$$
Note that $\bigl(\Nhnkp\bigr)^{\epsilon-}$ 
is the union of open balls of radius $\epsilon$ centered about the points in $\Nhnkp$. 
Since $K$ is a closed and bounded set, we can take, without loss of generality, $K=\bigl( [a,b] \times \bbr_+ \bigr) \cap B_{k,p-1} $ for some $0\leq a<b<\infty$. Let $\Q_{p-1}$ be a collection of cubes in $\bbr^2$ with side length $\epsilon/\sqrt{2}$ such that each cube intersects with $K$ and the union of these cubes covers $K$. Then
\begin{align*}
&\P \biggl( \Bigl[ (B_{k,p-1} ) \setminus \Bigl( \Nhnkp\Bigr)^{\epsilon-}   \Bigr] \cap K \neq \emptyset  \biggr) \\
&\leq \P \Bigl( \bigcup_{Q \in\Q_{p-1}} \bigcap_{m=k+2}^{p-1} \{ \Nnkm \cap Q = \emptyset \}  \Bigr) \\
&\leq \sum_{Q\in\Q_{p-1}} \P \bigl( \Nnkpone \cap Q =\emptyset \bigr) \\
&\leq \sum_{Q\in\Q_{p-1}} \P \Bigl( \, \bigl| \, |\Nnkpone \cap Q| - \E\bigl(| \Nnkpone \cap Q| \bigr)  \bigr| \geq \E \bigl(| \Nnkpone \cap Q| \bigr) \Bigr) \\
&\leq \sum_{Q\in\Q_{p-1}} \frac{\text{Var} \bigl(| \Nnkpone \cap Q| \bigr)}{\Bigl[ \E \bigl(  |\Nnkpone \cap Q| \bigr) \Bigr]^2}.
\end{align*}
Hence, from Lemma \ref{l:exp.variance} we can bound the rightmost term by a constant multiple of $nM^df(\Rpn )$. 
Since $nM^df(\Rpn ) \to 0$, the desired result follows. 
\vspace{10pt}


\noindent {\bf Part III:}\\
Here we wish to combine I and II to conclude the statement in the theorem,
$$
\NnRk \Rightarrow \Nkp \cup B_{k,p-1} \ \ \text{in } \mathcal F(\Delta). 
$$
Since $\mathcal F(\Delta)$ is metrizable in the Fell topology (see \cite{attouch:1984}), \eqref{e:goal.part2} implies that there exists a metric on $\mathcal F(\Delta)$, denoted $\rho$, such that 
\begin{equation}  \label{e:rephrase.part2}
\rho \Bigl( \Nhnkp, \, B_{k,p-1}  \Bigr) \stackrel{p}{\to} 0. 
\end{equation}
Now, combining the convergences \eqref{e:main.part1} and \eqref{e:rephrase.part2} gives (see Proposition 3.1 in \cite{resnick:2007}), 
$$
\Bigl( \bigcup_{m=p}^\infty \Nnkm, \, \Nhnkp \Bigr) \Rightarrow \bigl( \Nkp, B_{k,p-1} \bigr) \ \ \text{in } \mathcal F(\Delta) \times \mathcal F(\Delta), 
$$
where $\mathcal F(\Delta) \times \mathcal F(\Delta)$ is equipped with the product topology. 
Finally, using the fact that 
$$
(F_1,F_2) \in \mathcal F(\Delta) \times \mathcal F(\Delta) \mapsto F_1 \cup F_2 \in \mathcal F(\Delta)
$$
is continuous (see page 7 in \cite{matheron:1975}), we can conclude from the continuous mapping theorem that
$$
\bigcup_{m=k+2}^{\infty} \Nnkm \Rightarrow \Nkp \cup B_{k,p-1} \ \ \text{in } \mathcal F(\Delta).
$$
\end{proof}

Before finishing this section we provide a proof for Corollary \ref{cor:vague}.
\begin{proof}[Proof of Corollary \ref{cor:vague}]
To show convergence of $\sum_{m=p}^\infty \Nnkm$ in $\text{MP}(\Delta)$, we will use Kallenberg's theorem (see Proposition 3.22 in \cite{resnick:1987}), for which we need to show that for any measurable set $A \subset \Delta$, 
\[
\begin{split}
\E \Bigl[\, \sum_{m=p}^\infty \Nnkm (A) \Bigr] &\to \E\bigl( \Nkp (A) \bigr), \\
\P \Bigl(\, \sum_{m=p}^\infty \Nnkm (A)=0 \Bigr) &\to \P \Bigl( \Nkp(A)=0 \Bigr). 
\end{split}
\]
The first limit is a direct result of Lemma \ref{l:expectation} and $T_3 \to 0$ in \eqref{eq:t123}. 
For the second limit, we have
\begin{align*}
\P \Bigl(\, \sum_{m=p}^\infty \Nnkm (A)=0 \Bigr) &= \P\bigl( \Nnkp(A)=0 \bigr) + o(1) \\
&= \P\bigl( \Ntnkp(A)=0 \bigr) + o(1) \\
&\to \P \bigl( \Nkp(A)=0 \bigr), \ \ \ n\to\infty,
\end{align*}
where we have used $T_i\to 0$, $i=1,2,3$ in \eqref{eq:t123}.
\end{proof}

\subsection{Exponentially decaying tails}  \label{sec:proof.exp.tail}

The proof for the exponentially decaying tail case goes mostly parallel to that in the previous subsection. 
In particular, regardless of heaviness of the tail of $f$, the weak limits in Theorems \ref{t:main.fell} and \ref{t:main.fell.exponential} are characterized by a Poisson random measure, the only difference lying in the limiting mean measures. Therefore, the current subsection only presents the results on the moment asymptotics corresponding to Lemmas \ref{l:expectation} and \ref{l:exp.variance}. All the arguments that follow are essentially the same as the heavy tail case,  so we omit them. 

\begin{lemma} \label{l:expectation.exponential}
Let $A\subset \Delta$ be a measurable set, such that $A \cap B_{k,p} \neq \emptyset$. Under the conditions of Theorem \ref{t:main.fell.exponential},
\[
\lim_{n\to\infty}\E \bigl( |\Nnkp \cap A| \bigr) = \lim_{n\to\infty}\E \bigl(| \Ntnkp \cap A| \bigr) = \E \bigl( |\Nkp \cap A| \bigr) \in (0,\infty).
\]
If $A\cap B_{k,p-1} \ne \emptyset$, then
\begin{align*}
\E\bigl( | \Nnkpone \cap A| \bigr) &\sim C_3\bigl( nM^df(\Rpn) \bigr)^{-1}, \ \ n\to\infty, \ \ \text{and } \\
\text{Var} \bigl( | \Nnkpone \cap A| \bigr) &\le C_4 \big( nM^d f(R) \big)^{-1}
\end{align*}
for some $C_3, C_4>0$, which are independent of $n$. 
\end{lemma}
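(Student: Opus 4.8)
The plan is to mirror the proofs of Lemmas \ref{l:expectation} and \ref{l:exp.variance} line by line, replacing every appeal to regular variation and the Potter bounds with the corresponding von-Mises tail asymptotics. Starting from the Palm formula \eqref{e:palm.theory.first.lemma} for $\E(|\Nnkp \cap A|)$ and conditioning on $\X_p$ exactly as before, I would arrive at an integral of the shape $\frac{n^p}{p!}M^{d(p-1)}\int_{\R^d}dx\int_{(\R^d)^{p-1}}d\by\, h_R(x,x+M\by)\,g_1(0,\by)\,\muk_{(0,\by)}(A)\,e^{-nQ_{2M}(x,x+M\by)}f(x)\prod_{i=1}^{p-1}f(x+My_i)$, using the translation and scaling invariance of $g_M$ and $\muk$. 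The single essential change is the radial substitution: where the heavy-tail proof sets $r=R\rho$, here I would use the \emph{additive} rescaling $r=R+a(R)\rho$, which is the correct normalization for a tail governed by a von-Mises function.

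After the polar decomposition $x\leftrightarrow(r,\theta)$ and this substitution, the radial Jacobian becomes $r^{d-1}dr=(R+a(R)\rho)^{d-1}a(R)\,d\rho\sim R^{d-1}a(R)\,d\rho$ by \eqref{e:cesaro}, so the prefactor is $n^pM^{d(p-1)}a(R)R^{d-1}(f(R))^p$, which tends to $1$ by \eqref{e:asym.equ.exponential}. Writing $f(z)=L(\|z\|)e^{-\psi(\|z\|)}$ and expanding $\|(R+a(R)\rho)\theta+My_i\|=R+a(R)\rho+M\langle\theta,y_i\rangle+O(M^2/R)$, I would use $\psi'(R)a(R)=1$ together with $M/a(R)\to c^{-1}$ to obtain $\psi(\|(R+a(R)\rho)\theta+My_i\|)-\psi(R)\to\rho+c^{-1}\langle\theta,y_i\rangle$; combined with the flatness \eqref{e:flat} of $L$ this gives $f(R+a(R)\rho)/f(R)\to e^{-\rho}$ for the base point and $e^{-\rho-c^{-1}\langle\theta,y_i\rangle}$ for each shifted point, whose product is the exponential weight in \eqref{e:mean.meas.light}. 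The constraint $h_R$ then converges to $\one\{\rho+c^{-1}\langle\theta,y_i\rangle\ge0,\ i=1,\dots,p-1\}$, while the Palm factor $e^{-nQ_{2M}}\to1$ because \eqref{e:asym.equ.exponential} with $M/R\to0$ and $d\ge2$ forces $(nM^df(R))^p\sim M^d/(a(R)R^{d-1})=(M/a(R))(M/R)^{d-1}\to0$, hence $nM^df(R)\to0$. The identical argument handles $\Ntnkp$, since dropping the requirement that $\X_p$ be isolated merely removes the factor $e^{-nQ_{2M}}$, which tends to $1$ regardless.

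The main obstacle is the dominated-convergence bound, as the clean estimates \eqref{e:Potter1}--\eqref{e:Potter2} are unavailable in this regime. I would establish a von-Mises analog: bounding $L$ from above via \eqref{e:L.gamma}, and bounding the increment from below through the representation $\psi(R+a(R)\rho)-\psi(R)=\int_0^\rho a(R)/a(R+a(R)u)\,du$, where $a'(z)\to0$ gives, for any small $\delta>0$ and large $R$, $a(R+a(R)u)\le a(R)(1+\delta u)$ and therefore $\psi(R+a(R)\rho)-\psi(R)\ge\delta^{-1}\log(1+\delta\rho)$. Taking $\delta$ small, this polynomial-type decay dominates the polynomial growth of $L$ and yields an integrable majorant, justifying the interchange of limit and integral and producing the mean measure \eqref{e:mean.meas.light}; positivity of the limit is immediate from $A\cap B_{k,p}\ne\emptyset$.

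For $\E(|\Nnkpone\cap A|)$ I would rerun the computation with $p-1$ points, producing the prefactor $n^{p-1}M^{d(p-2)}a(R)R^{d-1}(f(R))^{p-1}$; dividing and multiplying by the quantity in \eqref{e:asym.equ.exponential} shows this equals $(nM^df(R))^{-1}$ times a factor tending to $1$, with the surviving triple integral converging to a positive constant $C_3$. The variance bound then follows the decomposition $\E[|\Nnkpone\cap A|^2]=\sum_{\ell=0}^{p-1}I_\ell$ of Lemma \ref{l:exp.variance} verbatim: the term $I_{p-1}\sim C_3(nM^df(R))^{-1}$, the cross terms $I_\ell$ for $1\le\ell\le p-2$ vanish because two overlapping subsets cannot both be connected components, and $I_0-[\E(|\Nnkpone\cap A|)]^2\to0$ via the same overlap estimate $g_M(\Y_1)g_M(\Y_2)\one\{\BM(\Y_1)\cap\BM(\Y_2)\ne\emptyset\}\le g_{2M}(\Y_{12})$, which now gives a bound of order $(nM^df(R))^{p-2}\to0$ since $p\ge k+2\ge3$. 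Collecting these, the leading $(nM^df(R))^{-1}$ term dominates, yielding $\text{Var}(|\Nnkpone\cap A|)\le C_4(nM^df(R))^{-1}$.
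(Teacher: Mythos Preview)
Your proposal is correct and follows the paper's approach almost verbatim: Palm formula, the additive radial rescaling $r=R+a(R)\rho$, Taylor expansion of $\|(R+a(R)\rho)\theta+My_i\|$, von-Mises asymptotics for the density ratios, and the identical variance decomposition. The one genuine deviation is the dominated-convergence majorant: the paper obtains it via a dyadic decomposition based on $q_m(n)=(\psi^{-1}(\psi(R)+m)-R)/a(R)$ together with an auxiliary lemma of Balkema--Embrechts, whereas your bound $a(R+a(R)u)\le a(R)(1+\delta u)$ from $a'\to 0$, giving $\psi(R+a(R)\rho)-\psi(R)\ge\delta^{-1}\log(1+\delta\rho)$, is a more elementary and self-contained alternative that achieves the same integrability once $\delta<(d+\gamma p)^{-1}$.
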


\begin{proof}
Among these claims in the above lemma, we shall prove the first limit only, i.e. 
\begin{equation}  \label{e:first.limit.exp}
\lim_{n\to\infty}\E \bigl( |\Nnkp \cap A| \bigr) = \E \bigl( |\Nkp\cap A| \bigr).
\end{equation}
The rest of the proofs will be similar and hence omitted. 

Using Palm theory we have
\begin{align*}
\E \bigl( |\Nnkp \cap A| \bigr) &= \frac{n^p}{p!}\, \E \Bigl[ h_R(\X_p) g_M(\X_p,\X_p \cup \Pn) \muk_{\X_p} (MA)\Bigr],
\end{align*}
where $\X_p=(X_1,\dots,X_p)$ denotes iid points, independent of $\Pn$. 
Conditioning on $\X_p$ and changing variables $x_1 \leftrightarrow x$, $x_i  \leftrightarrow x+M y_{i-1}$, $i=2,\dots,p$, we obtain 
\[
\begin{split}
\E \bigl(| \Nnkp \cap A| \bigr)  = \frac{n^p}{p!}\, M^{d(p-1)} \int_{\bbr^d}dx \int_{(\bbr^d)^{p-1}} d\by & h_R(x,x+M\by) g_1(0,\by) \mu_{(0,\by)}^{(k)}(A)\\
\times &e^{-n \pM(x,x+M\by)} f(x) \prod_{i=1}^{p-1} f(x + My_i). 
\end{split}
\]
Changing into polar coordinate change $x \leftrightarrow (r,\ta)$, along with an additional change of variable $\rho = a(\Rpn)^{-1} (r-\Rpn)$ we have 
\begin{equation}\label{e:long.expression}
\begin{split}
\E \bigl(| \Nnkp \cap A| \bigr) &= \frac{n^p}{p!}\, M^{d(p-1)} a(\Rpn) \Rpn^{d-1} (f(\Rpn))^p\int_0^\infty d\rho \int_{S^{d-1}} J(\ta) d\ta \int_{(\bbr^d)^{p-1}} d\by \\
&\times \Bigl( 1+\frac{a(\Rpn)}{\Rpn}\rho \Bigr)^{d-1} 
h_R\big((R+a(R)\rho)\theta,(R+a(R)\rho)\theta+M\by\big) \\
&\times g_1(0,\by)\muk_{(0,\by)}(A) e^{-n \Q_{2M} ( ( \Rpn +a(\Rpn)\rho )\ta,  (\Rpn + a(\Rpn)\rho)\ta + M \by)} \\
&\times \frac{f\big( R + a(R)\rho \big)}{f(R)} \prod_{i=1}^{p-1} \frac{f\big( \| (\Rpn + a(\Rpn)\rho)\ta + M y_i \| \big)}{f(R)}. 
\end{split}
\end{equation}

Using the Taylor expansion, we have
\begin{equation}  \label{e:Taylor1}
\| (\Rpn + a(\Rpn)\rho)\ta + M y_i \| = \Rpn + a(\Rpn) \rho + M \bigl( \langle \ta, y_i \rangle + \gamma_n (\rho, \ta, y_i) \bigr),
\end{equation}
where $\gamma_n (\rho, \ta, y_i) \to 0$ uniformly for $\rho >0$, $\ta \in S^{d-1}$, and $y_i$ in a bounded set in $\bbr^d$. Denoting
$$
\xi_n (\rho, \ta, y_i)  = \frac{\langle \ta, y_i \rangle + \gamma_n (\rho, \ta, y_i)}{a(\Rpn)/M}, 
$$
the right hand side in \eqref{e:Taylor1} is equal to 
$\Rpn + a(\Rpn) (\rho + \xi_n(\rho, \ta, y_i))$. 
Due to the uniform convergence of $\gamma_n (\rho, \ta, y_i)$, it is easy to show that for every $M>0$, 
\begin{equation}  \label{e:uniform.bound}
\sup_{\substack{\rho >0, n \geq 1,  \ta \in S^{d-1} \\ y_i \in [-M,M]^d}} \bigl| \xi_n (\rho, \ta, y_i) \bigr| < \infty, 
\end{equation}
and further, 
\begin{equation}  \label{e:conv.xi}
\xi_n (\rho, \ta, y_i) \to c^{-1} \langle \ta, y_i \rangle \ \ \text{as } n \to\infty. 
\end{equation}

In the following, we shall compute the limits for each term in \eqref{e:long.expression}  under the integral sign, and then establish an appropriate integrable bound for the application of the dominated convergence theorem. From \eqref{e:cesaro} we have that $\bigl( 1 + a(R)\rho/R \bigr) \to 1$ 
for all $\rho > 0$, and for sufficiently large $n$, this is bounded by $2 (\rho \vee 1)^{d-1}$. 
Subsequently, from \eqref{e:Taylor1} and \eqref{e:conv.xi}, we have that 
$$
h_R\big((R+a(R)\rho)\theta,(R+a(R)\rho)\theta+M\by\big)  \to \one \bigl\{ \rho + c^{-1} \langle \ta, y_i \rangle \geq 0, \ i=1,\dots,p-1 \bigr\}. 
$$
As for the ratio terms for the density $f$, using \eqref{eq:f_vm} we write
\begin{equation} \label{e:first.ratio}
\frac{f\bigl( \Rpn + a(\Rpn)\rho \bigr)}{f(\Rpn )} = \frac{L\bigl( \Rpn  + a(\Rpn)\rho \bigr)}{L(\Rpn)}\, e^{-[ \psi(\Rpn + a(\Rpn)\rho ) - \psi(\Rpn) ]}
\end{equation}
so that $L\bigl( \Rpn  + a(\Rpn)\rho \bigr) / L(\Rpn) \to 1$ 
for all $\rho>0$, and 
$$
e^{-[ \psi(\Rpn + a(\Rpn)\rho ) - \psi(\Rpn) ]} = \exp \Bigl\{  -\int_0^\rho \frac{a(\Rpn)}{a(\Rpn + a(\Rpn)r)}\, dr  \Bigr\} \to e^{-\rho} , \ \ \ n\to\infty.
$$
For the last convergence, we  applied an elementary result in p.~142 of \cite{embrechts:kluppelberg:mikosch:1997}, which asserts that 
$$
\frac{a(x)}{a\big( x + a(x)r \big)} \to 1 \ \ \text{as } x\to\infty,
$$
uniformly for $r$ in any bounded interval. 
In order to give a proper upper bound for \eqref{e:first.ratio}, 
we let 
$$
q_m(n) =  \frac{\psi^{-1} \bigl( \psi (\Rpn) + m \bigr) - \Rpn}{ a(\Rpn)}, \ \ m \ge 1,
$$
equivalently, $\psi \bigl( \Rpn  + a(\Rpn)q_m(n)  \bigr) = \psi (\Rpn) + m$. 
Accordingly to Lemma 5.2 in \cite{balkema:embrechts:2004}, for every $0 < \epsilon < (d+\gamma p)^{-1}$ ($\gamma$ is a parameter at \eqref{e:L.gamma}), there exists $N \geq 1$ such that $q_m (n) \leq e^{m \epsilon} / \epsilon$ for all $n \geq N$ and $m \geq 1$. 
Since $\psi$ is increasing, we have
\begin{align*}
e^{-[ \psi(\Rpn + a(\Rpn)\rho ) - \psi(\Rpn) ]}  \one\{ \rho >0 \} &= \sum_{m=0}^\infty \one \bigl\{ q_m(n) < \rho \leq q_{m+1}(n) \bigr\} e^{-[ \psi(\Rpn + a(\Rpn)\rho ) - \psi(\Rpn) ]} \\
&\leq \sum_{m=0}^\infty \one \bigl\{ 0 < \rho \leq e^{(m+1)\epsilon} / \epsilon \bigr\}\, e^{-m}
\end{align*}
for all $n \geq N$. 
For the derivation of the bound for $L$, we use \eqref{e:L.gamma}, i.e., 
$$
\frac{L\bigl( \Rpn  + a(\Rpn)\rho \bigr)}{L(\Rpn)}\, \one \{ \rho >0 \} \leq 2C (\rho \vee 1)^\gamma \one \{ \rho >0 \}
$$
for sufficiently large $n$. Combining these bounds, 
$$
\frac{f\bigl( \Rpn + a(\Rpn)\rho \bigr)}{f(\Rpn )}  \leq 2C (\rho \vee 1)^{\gamma} \sum_{m=0}^\infty \one \bigl\{ 0 < \rho \leq e^{(m+1)\epsilon} / \epsilon \bigr\}\, e^{-m}. 
$$

We next deal with the product terms of the probability densities in \eqref{e:long.expression}. For each $i=1,\dots,p-1$, it follows from \eqref{e:Taylor1}, \eqref{e:uniform.bound}, and \eqref{e:conv.xi} that 
\begin{align*}
\frac{f\bigl( \| (\Rpn + a(\Rpn)\rho)\ta + M y_i \| \bigr)}{f(\Rpn )} &= \frac{L\bigl( \Rpn + a(\Rpn) (\rho + \xi_n (\rho, \ta, y_i)) \bigr)}{L(\Rpn)} \\
&\times \exp \Bigl\{ -\int_0^{\rho + \xi_n(\rho, \ta, y_i)}\frac{a(\Rpn)}{a(\Rpn  + a(\Rpn)r)}\, dr  \Bigr\} \\
&\to e^{-\rho - c^{-1} \langle \ta, y_i \rangle}
\end{align*}
for all $\rho >0$, $\ta \in S^{d-1}$, and $y_i \in \bbr^d$. As for the suitable integrable bound, simply dropping the exponential term, we have 
$$
\frac{f\bigl( \| (\Rpn + a(\Rpn)\rho)\ta + M y_i \| \bigr)}{f(\Rpn )} \leq C^\prime (\rho \vee 1)^{\gamma (p-1)}
$$
for some constant $C^\prime > 0$. 
For the exponential term in \eqref{e:long.expression}, 
\begin{align*}
n &\Q_{2M} \bigl( ( \Rpn +a(\Rpn)\rho )\ta,  (\Rpn + a(\Rpn)\rho)\ta + M \by \bigr) \\
&= nM^d f(\Rpn ) \int_{\B_2(0,\by)} \frac{f\bigl( \| (\Rpn + a(\Rpn)\rho)\ta + M v  \|  \bigr)}{f(\Rpn )}\, dv \to 0,
\end{align*}
since \eqref{e:asym.equ.exponential} implies $nM^d f(\Rpn )  \to 0$, $n\to\infty$. 

Combining all convergence results together, while assuming the applicability of the dominated convergence theorem, we get \eqref{e:first.limit.exp} as desired. Finally, apply all the bounds derived thus far, and note that 
\begin{align*}
\int_0^\infty \sum_{m=0}^\infty \one \bigl\{ 0 < \rho \leq e^{(m+1)\epsilon} / \epsilon \bigr\}\, e^{-m} (\rho \vee 1)^{d+\gamma p -1} d\rho  \leq \left( \frac{e^{\epsilon}}{\epsilon} \right)^{d+\gamma p} \sum_{m=0}^\infty e^{-[1-\epsilon (d+\gamma p)]m} < \infty, 
\end{align*}
since $0 < \epsilon < (d+\gamma p)^{-1}$. Therefore, the dominated convergence theorem is applicable. 
\end{proof}

\bibliography{Takashi_ref}

\end{document}